\documentclass[11pt]{article}

\usepackage{amsmath,amssymb,amsthm,mathrsfs}
\usepackage[a4paper,margin=1in]{geometry}

\newtheorem{theorem}{Theorem}[section]
\newtheorem{proposition}[theorem]{Proposition}
\newtheorem{corollary}[theorem]{Corollary}
\newtheorem{lemma}[theorem]{Lemma}
\newtheorem{example}[theorem]{Example}
\newtheorem{remark}[theorem]{Remark}
\newtheorem{definition}[theorem]{Definition}

\newcommand{\R}{\mathbb R}

\newcommand{\email}[1]{\texttt{#1}}

\usepackage{tikz}
\usetikzlibrary{arrows,calc,decorations.pathreplacing}
\usepackage{float}

\begin{document}
	
	\title{A weighted Birkhoff orthogonal James-type constant}
	\author{Junxiang Qi$^{1}$, Zhouping Yin$^{1}$, Qi Liu$^{1*}$, Yongjin Li$^{2}$}
	\date{}
	
	\maketitle
	

	{\small 1 School of Mathematics and Statistics
		Anqing Normal University
		Anqing--246133, P. R. China
		
		\email{y25060009@stu.aqnu.edu.cn},\quad
		\email{yzp@aqnu.edu.cn},\quad
		\email{liuq67@aqnu.edu.cn}}
	
	\vspace{0.3em}
	
	{\small 2 Department of Mathematics
		Sun Yat-sen University
		Guangzhou--510275, P. R. China
		
		\email{stslyj@mail.sysu.edu.cn}}

\begin{abstract}
Let $X$ be a real Banach space and $\lambda \in[0,1]$. Motivated by orthogonal versions of the James constant, we introduce the weighted Birkhoff orthogonal James-type constant $$J_\lambda^{\perp}(X)=\sup \left\{\min \{\|\lambda x+(1-\lambda) y\|,\|\lambda x-(1-\lambda) y\|\}: x, y \in S_X, x \perp_B y\right\},$$ where \(\lambda\in[0,1]\) and $x \perp_B y$ stands for Birkhoff orthogonality. We establish its basic bounds, stability properties, and reduction principles, and clarify its relations with the orthogonal James constant $J_{\perp}(X)$. The 2 -Lipschitz continuity of $J_\lambda^{\perp}(X)$ with respect to $\lambda$ is proved. New characterizations of uniformly nonsquare spaces are obtained; in particular, $J_\lambda^{\perp}(X)=1$ for some $\lambda \in(0,1)$ if and only if $X$ is not uniformly nonsquare. We also discuss connections with strict convexity, uniform convexity, modulus of smoothness, and the von Neumann-Jordan constant. 

\end{abstract}
    {\bf{Keywords}\rm} {Banach space, Birkhoff orthogonality, geometric constant, orthogonal vectors}\\
    {\bf{Mathematics Subject Classification (2020).}\rm} { 46B20}

\section{ Introduction
	 and preliminaries}
	 
The geometry of Banach spaces has long been a central topic in functional analysis, focusing on the interplay between norm structure, geometric properties, and topological behavior. A real Banach space 
 \(X\)
is a complete normed vector space, where the norm encodes key geometric information such as convexity, smoothness, and reflexivity. These properties determine the behavior of operators, the existence of fixed points, and the structure of unit balls and spheres, making Banach space geometry indispensable in nonlinear analysis, approximation theory, and operator theory, can refer to \cite{1,2,3,4}.

A powerful way to quantify geometric properties is through geometric constants, which assign a numerical value to a geometric feature and enable precise comparisons between different spaces. Representative examples include the James constant, the von Neumann-Jordan constant, the modulus of convexity and the smoothness constant. The James constant, introduced by Gao and Lau, measures the deviation of a space from Euclidean geometry and plays a critical role in characterizing uniform nonsquareness. For more papers on geometric constants,refer to \cite{5,6,7,9,10}.

Birkhoff-James orthogonality is fundamental in Banach space geometry\cite{8}, as it reflects the directional symmetry of the norm and underlies many characterizations of convexity, smoothness, and reflexivity.

Let \(X\) be a real Banach space and let \(S_X=\{x\in X:\|x\|=1\}\).
Recall that \(x\) is said to be Birkhoff--James orthogonal to \(y\), written
\(x\perp_B y\), if
\[
\|x\|\leq \|x+ty\| \qquad \text{for every } t\in R.
\]

Recently, various geometric constants related to Birkhoff orthogonality have been defined(see \cite{19,20,21,22}):
 $$
 D^{\prime}(X)=\sup \left\{\|x+y\|-\|x-y\|: x, y \in S_X, x \perp_B y\right\} .
 $$

$$
	B R(X)=\sup _{\alpha>0}\left\{\|\alpha x+y\|-\|\alpha x-y\|: x, y \in S_X, x \perp_B y\right\}.
$$

 $$
A_2(X, B)=\sup \left\{\frac{\|x+y\|+\|x-y\|}{2}: x, y \in S_X, x \perp_B y\right\} .
$$

$$
\begin{aligned}
	C_B(X) =\sup \left\{\frac{\|x+y\|^2-\|x-y\|^2}{4}: x, y \in S_X, x \perp_B y\right\}.
\end{aligned}
$$
We note that these suprema are considered only in the unit sphere $S_X$.

Recall that the Banach space $X$ was called uniformly non-square \cite{11} if there exists a $\delta \in(0,1)$ such that for any $x, y \in S_X$ either $\frac{\|x+y\|}{2} \leqslant 1-\delta$ or $\frac{\|x-y\|}{2} \leqslant 1-\delta$.

The modulus of smoothness \cite{17} of a space $X$, denoted by the function $\rho(t)$, is defined as follows:

$$
\rho(t)=\sup \left\{\frac{\left\|x+t y\right\|+\left\|x-t y\right\|}{2}-1: x, y \in S_X\right\} .
$$

The James constant $J(X)$ of a Banach space $X$ was introduced by Gao and Lau \cite{18} as follows:

$$J(X)=\sup \left\{\min \{\|x+y\|,\|x-y\|\}: x, y \in S_X\right\} .$$

 Restricting the James constant to Birkhoff-James orthogonal pairs yields the orthogonal James constant\cite{10}

$$
J_{\perp}(X)=\sup \left\{\min \{\|x+y\|,\|x-y\|\}: x, y \in S_X, x \perp_B y\right\},
$$
which precisely characterizes uniform nonsquareness: $X$ is uniformly nonsquare if and only if
$
J_{\perp}(X)<2 .
$

The constant $J(t, X)$

$$
J(t, X)=\sup \left\{\min \{\|x+t y\|,\|x-t y\|\}: x, y \in S_X\right\}
$$
was studied by He and Cui \cite{13} as a generalization of the constant $J(X)$.

Generalized forms of the James constant and local James constant were proposed in literature \cite{14}.
For any $\lambda \in(0,1)$, the generalized James constant $J(\lambda, X)$ is defined by

$$
J(\lambda, X)=\sup \left\{\min \{\|\lambda x+(1-\lambda) y\|,\|\lambda x-(1-\lambda) y\|\}: x, y \in S_{\mathbb{X}}\right\} .
$$

The paper is structured as follows: we first review basic concepts and classical geometric constants in Banach space geometry, including Birkhoff-James orthogonality, the orthogonal James constant, uniform nonsquareness, modulus of smoothness, and the von Neumann-Jordan constant. In Section 2, we formally define the weighted Birkhoff-James orthogonal James-type constant $J_\lambda^{\perp}(X)$ and establish its elementary properties, such as basic bounds, subspace reduction principles, attainment in finite-dimensional spaces, and exact values for Hilbert spaces and $\ell_p^n$ spaces. In Section 3, we derive two-sided inequalities relating $J_\lambda^{\perp}(X)$ to the orthogonal James constant, prove its 2-Lipschitz continuity with respect to $\lambda$, and discuss applications in characterizing uniformly nonsquare spaces, strict convexity, uniform convexity, modulus of smoothness, and the von Neumann-Jordan constant, along with key corollaries for geometric properties and fixed point theory.

\section{The new constant \(J_\perp(X)\) and elementary properties}

Throughout the paper we assume \(\dim X\geq 2\), so that non-trivial
Birkhoff--James orthogonal pairs exist.

We now define the weighted version.
Despite its success, the unweighted orthogonal James constant lacks flexibility in describing intermediate geometric regimes, where the balance between orthogonal vectors varies continuously. To address this limitation, we introduce a weighted Birkhoff-James orthogonal James-type constant $J_\lambda^{\perp}(X)$ for $\lambda \in[0,1]$, it is defined as follows.

\begin{definition}
	Let \(X\) be a real Banach space and let \(\lambda\in[0,1]\). Define
	\[
	J_\lambda^\perp(X)
	=
	\sup\left\{
	\min\left\{
	\|\lambda x+(1-\lambda)y\|,
	\|\lambda x-(1-\lambda)y\|
	\right\}
	:
	x,y\in S_X,\ x\perp_B y
	\right\}.
	\]
\end{definition}

For $x, y \in S_X$ and $\lambda \in[0,1]$, put

$$
m_\lambda(x, y)=\min \{\|\lambda x+(1-\lambda) y\|,\|\lambda x-(1-\lambda) y\|\} .
$$

Thus

$$
J_\lambda^{\perp}(X)=\sup \left\{m_\lambda(x, y): x, y \in S_X, x \perp_B y\right\} .
$$

\begin{figure}[H]
\centering
\begin{tikzpicture}[>=stealth, scale=1.3]
	
	\coordinate (O) at (0,0);
	
	\coordinate (x) at (1.2,1.8);
	\coordinate (lambda_x) at (0.6,0.9); 
	\coordinate (p_plus) at (3,2.5);     
	\coordinate (p_minus) at (3,-2.5);  
	
	\draw[blue,->,thick] (O) -- (x) node[above left,blue] {$x$};
	
	\draw[black,thick] (O) -- (lambda_x);
	\node[black,above left] at (0.7,0.85) {$\lambda x$};
	
	\draw[blue,dashed] (x) -- (p_plus) node[midway,above left,blue] {$+(1-\lambda)y$};
	
	\draw[black,dashed] (lambda_x) -- (p_minus) node[midway,below,right,black] {$-(1-\lambda)y$};
	
	\draw[blue,->,thick] (O) -- (p_plus) node[midway,right,blue] {$\lambda x + (1-\lambda)y$};
	\fill[blue] (p_plus) circle (2pt);
	\node[blue,above] at (p_plus) {$p_+ = \lambda x + (1-\lambda)y$};
	
	\draw[red,->,thick] (O) -- (p_minus) node[midway,below left,red] {$\lambda x - (1-\lambda)y$};
	\fill[red] (p_minus) circle (2pt);
	\node[red,below] at (p_minus) {$p_- = \lambda x - (1-\lambda)y$};
	
	\node[left] at (O) {$O$};
	
	\draw[decorate,decoration={brace,amplitude=10pt},thick]
	($(p_plus)+(1.1,0.2)$) -- ($(p_minus)+(1.1,-0.2)$)
	node[midway,right,align=left] {
		$\min\Big\{\|\lambda x + (1-\lambda)y\|,$ \\
		$\quad\|\lambda x - (1-\lambda)y\|\Big\}$
	};

\end{tikzpicture}
\caption{Geometric construction of weighted orthogonal geometric constant $J_\lambda^\perp(X)$}
\label{fig:jlambda}
\end{figure}

This figure \ref{fig:jlambda} provides an intuitive illustration: given a unit vector $x$, vary the unit vector $y$ under a certain orthogonality condition, and compute the norms of the two combinations $\lambda x \pm(1-\lambda) y$. The geometric constant $J_\lambda^{\perp}(X)$ is defined by comparing their minimum possible maximum (or maximum possible minimum), which characterizes the deviation of orthogonality and inner product properties in a normed space.

\begin{remark}
 In particular, when $\lambda=1 / 2$,
 $
 J_{1 / 2}^{\perp}(X)=\frac{1}{2} J_{\perp}(X)
 $ .
 
\end{remark}
\begin{remark}
		For every real Banach space \(X\).
		
		If \(\lambda=0\), then
		\[
		\|\lambda x+(1-\lambda)y\|=\|y\|=1,
		\qquad
		\|\lambda x-(1-\lambda)y\|=\|-y\|=1.
		\]
		Hence the minimum equals \(1\), and so \(J_0^\perp(X)=1\).
		
		If \(\lambda=1\), then \(J_1^\perp(X)=1\).
\end{remark}

\begin{proposition}
For every real Banach space \(X\) and every
\(\lambda\in[0,1]\),
\[
0\leq J_\lambda^\perp(X)\leq 1.
\]
Moreover, if \(0<\lambda<1\), then
\[
J_\lambda^\perp(X)\geq \max\{\lambda,|2\lambda-1|\}.
\]
\end{proposition}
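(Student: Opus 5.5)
The upper bound follows from the triangle inequality, and the lower bounds from a single orthogonal pair estimated in two ways. First I will check that the supremum is taken over a nonempty set, so that $J_\lambda^\perp(X)$ is a genuine number in $[0,\infty)$. Fix any $x\in S_X$. By the Hahn--Banach theorem there is a norming functional $f\in X^*$ with $\|f\|=1$ and $f(x)=1$. Since $\dim X\geq 2$, the kernel of $f$ contains a vector $y\in S_X$. For every $t\in\R$,
\[
\|x+ty\|\geq f(x+ty)=1=\|x\|,
\]
so $x\perp_B y$. Hence admissible pairs exist. Since every $m_\lambda(x,y)$ is a minimum of norms, the bound $J_\lambda^\perp(X)\geq 0$ is immediate.

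For the upper bound, I will apply the triangle inequality to any $x,y\in S_X$ and $\lambda\in[0,1]$:
\[
\|\lambda x\pm(1-\lambda)y\|\leq \lambda\|x\|+(1-\lambda)\|y\|=1.
\]
Thus $m_\lambda(x,y)\leq 1$, and taking the supremum gives $J_\lambda^\perp(X)\leq 1$. Orthogonality is not used in this step.

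For the lower bound with $0<\lambda<1$, I will fix the orthogonal pair $x\perp_B y$ in $S_X$ constructed above and bound both terms of $m_\lambda(x,y)$ from below in two ways.

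\emph{First bound, using orthogonality.} Factor out $\lambda>0$ and apply $x\perp_B y$ with $t=\pm(1-\lambda)/\lambda$:
\[
\|\lambda x\pm(1-\lambda)y\|=\lambda\Bigl\|x\pm\tfrac{1-\lambda}{\lambda}y\Bigr\|\geq \lambda\|x\|=\lambda.
\]

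\emph{Second bound, using only unit norms.} The reverse triangle inequality gives
\[
\|\lambda x\pm(1-\lambda)y\|\geq \bigl|\lambda\|x\|-(1-\lambda)\|y\|\bigr|=|2\lambda-1|.
\]

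Combining the two bounds, $m_\lambda(x,y)\geq\max\{\lambda,|2\lambda-1|\}$. Since $J_\lambda^\perp(X)\geq m_\lambda(x,y)$, the claimed lower bound follows.

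The only step that is not a one-line inequality is the existence of a Birkhoff orthogonal pair in $S_X$, which the Hahn--Banach argument above supplies. The rest is the triangle inequality together with the definition of $\perp_B$. Without that existence step the supremum would be over the empty set, and the lower bound would be meaningless.
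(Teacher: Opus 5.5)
Your proof is correct and follows the paper's argument. You use the triangle inequality for the upper bound, and for the lower bound the two estimates $\|\lambda x\pm(1-\lambda)y\|\geq\lambda$ (from $x\perp_B y$ with $t=\pm(1-\lambda)/\lambda$) and $\|\lambda x\pm(1-\lambda)y\|\geq|2\lambda-1|$ (reverse triangle inequality). Your Hahn--Banach construction of a Birkhoff orthogonal pair in $S_X$ is a good addition, since the paper relies only on the standing assumption $\dim X\geq 2$ to ensure the supremum is over a nonempty set.
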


\begin{proof}
For \(x,y\in S_X\),
\[
\|\lambda x\pm(1-\lambda)y\|
\leq \lambda\|x\|+(1-\lambda)\|y\|=1.
\]
Hence the minimum is at most \(1\), and so \(J_\lambda^\perp(X)\leq 1\).

If \(x\perp_B y\), then for \(\lambda>0\),
\[
\|\lambda x\pm(1-\lambda)y\|
=
\lambda\left\|x\pm \frac{1-\lambda}{\lambda}y\right\|
\geq \lambda\|x\|
=
\lambda.
\]
Also, by the reverse triangle inequality,
\[
\|\lambda x\pm(1-\lambda)y\|
\geq
\big|\lambda-(1-\lambda)\big|
=
|2\lambda-1|.
\]
Thus for every Birkhoff--James orthogonal pair,
\[
\min\{
\|\lambda x+(1-\lambda)y\|,
\|\lambda x-(1-\lambda)y\|
\}
\geq
\max\{\lambda,|2\lambda-1|\}.
\]
Taking the supremum gives the claim.
\end{proof}

\begin{proposition}
	For every real Banach space \(X\),
	\[
	J_\lambda^\perp(X)
	=
	\sup\left\{
	J_\lambda^\perp(E): E\subset X,\ \dim E=2
	\right\}.
	\]
\end{proposition}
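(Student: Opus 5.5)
The plan is to prove the two inequalities separately; both rest on the fact that Birkhoff--James orthogonality is a local condition. Whether \(x\perp_B y\) holds depends only on the norm restricted to \(\mathrm{span}\{x,y\}\), since the defining inequality \(\|x\|\le\|x+ty\|\) only involves vectors of that span.

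For the inequality ``\(\geq\)'', let \(E\subset X\) be any two-dimensional subspace, equipped with the restricted norm. Then \(S_E\subset S_X\). Moreover, for \(x,y\in E\), the relation \(x\perp_B y\) holds in \(E\) if and only if it holds in \(X\), since both mean \(\|x\|\le\|x+ty\|\) for all \(t\in\R\) with the same norm. Hence every admissible pair for \(J_\lambda^\perp(E)\) is admissible for \(J_\lambda^\perp(X)\), with the same value \(m_\lambda(x,y)\). Therefore \(J_\lambda^\perp(E)\le J_\lambda^\perp(X)\), and taking the supremum over \(E\) gives the bound.

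For the inequality ``\(\leq\)'', fix \(x,y\in S_X\) with \(x\perp_B y\). First, \(x\) and \(y\) are linearly independent. If \(y=\pm x\), take \(t=\mp 1\) to get \(\|x+ty\|=0<1=\|x\|\), which contradicts orthogonality. So \(E=\mathrm{span}\{x,y\}\) has dimension exactly \(2\). Since \(\dim X\ge 2\), the family of such \(E\) is nonempty. By the locality remark above, \(x\perp_B y\) in \(E\) and \(x,y\in S_E\). Hence \(m_\lambda(x,y)\le J_\lambda^\perp(E)\le \sup_E J_\lambda^\perp(E)\). Taking the supremum over all admissible pairs in \(X\) finishes the argument.

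There is no serious obstacle here. The only points needing care are these:
\begin{itemize}
\item checking that the span is genuinely two-dimensional, which is the linear-independence step;
\item noting that the norms \(\|\lambda x\pm(1-\lambda)y\|\) are computed identically in \(E\) and in \(X\);
\item the endpoint cases \(\lambda\in\{0,1\}\), where both sides equal \(1\) by the earlier remark, so the argument covers them uniformly.
\end{itemize}
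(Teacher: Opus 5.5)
Your proof is correct and follows the same route as the paper. One inequality comes from monotonicity under restriction to a two-dimensional subspace, and the other from taking \(E=\operatorname{span}\{x,y\}\) and using that Birkhoff--James orthogonality depends only on the norm on that span. Your explicit check that \(x\perp_B y\) with \(x,y\in S_X\) forces \(y\neq\pm x\), so that \(\dim E=2\) exactly, settles a point the paper leaves hedged.
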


\begin{proof}
	First, by subspace monotonicity, for every two-dimensional subspace \(E\),
	\[
	J_\lambda^\perp(E)\leq J_\lambda^\perp(X).
	\]
	Hence
	\[
	\sup_{\dim E=2}J_\lambda^\perp(E)\leq J_\lambda^\perp(X).
	\]
	
	Conversely, let \(x,y\in S_X\) with \(x\perp_B y\), take
	\(
	E=\operatorname{span}\{x,y\}.
	\)
	
	Then \(E\) is at most two-dimensional, if \(x,y\) are linearly independent,
	then \(\dim E=2\). The Birkhoff-James condition
	
	$$
	\|x\| \leq\|x+t y\| \quad(t \in \mathbb{R})
	$$
	
	depends only on the norms of vectors in the two-dimensional subspace $\operatorname{span}\{x, y\}$, so it also holds in $E$. Hence \(x\perp_B y\) in \(E\).
	
	Therefore
	\[
	\min\{
	\|\lambda x+(1-\lambda)y\|,
	\|\lambda x-(1-\lambda)y\|
	\}
	\leq
	J_\lambda^\perp(E).
	\]
	Taking the supremum over all admissible pairs in \(X\) gives
	\[
	J_\lambda^\perp(X)\leq
	\sup_{\dim E=2}J_\lambda^\perp(E).
	\]
	Thus equality holds.
\end{proof}

\begin{proposition}
	If \(X\) is finite-dimensional, then for every \(\lambda\in[0,1]\) there
	exist \(x_0,y_0\in S_X\) with \(x_0\perp_B y_0\) such that
	\[
	J_\lambda^\perp(X)
	=
	\min\left\{
	\|\lambda x_0+(1-\lambda)y_0\|,
	\|\lambda x_0-(1-\lambda)y_0\|
	\right\}.
	\]
\end{proposition}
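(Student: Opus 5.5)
The argument is a standard compactness argument. We view the admissible set
\[
K=\{(x,y)\in S_X\times S_X:\ x\perp_B y\}
\]
as a subset of $X\times X$ with the product norm topology, and show that $K$ is nonempty and compact. Then $m_\lambda$ is continuous on $K$, so it attains its supremum there, and that supremum is exactly $J_\lambda^\perp(X)$.

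The steps, in order, are as follows.

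First, $K\neq\emptyset$. Since $\dim X\ge 2$ (the standing assumption), Birkhoff orthogonal pairs of unit vectors exist. Concretely, fix $x\in S_X$ and take a norming functional $f$ with $f(x)=\|f\|=1$ by Hahn--Banach. Pick a nonzero $z\in\ker f$ and set $y=z/\|z\|$. Then $\|x+ty\|\ge f(x+ty)=1=\|x\|$ for all $t$, so $x\perp_B y$.

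Second, $K$ is bounded, since it lies in $S_X\times S_X$. In a finite-dimensional space $S_X$ is compact, because all norms are equivalent and Heine--Borel applies. So it suffices to prove that $K$ is closed.

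Third, $K$ is closed, which is the only step needing a small argument. Let $(x_n,y_n)\in K$ converge to $(x,y)$. Continuity of the norm gives $x,y\in S_X$. For each fixed $t\in\mathbb R$ we have $\|x_n\|\le\|x_n+ty_n\|$, and letting $n\to\infty$ yields $\|x\|\le\|x+ty\|$. Hence $x\perp_B y$, so $K$ is closed, and therefore $K$ is compact.

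Finally, the map $(x,y)\mapsto m_\lambda(x,y)$ is a minimum of two compositions of continuous maps, hence continuous on $X\times X$. By the extreme value theorem it attains its maximum on the compact set $K$ at some $(x_0,y_0)$. This maximum equals $\sup_K m_\lambda=J_\lambda^\perp(X)$, which is the claim.

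The argument has no real obstacle. The points to check are nonemptiness of $K$ and the passage to the limit in the orthogonality inequality; the latter is pointwise in $t$, so no uniformity is needed. The endpoint cases $\lambda\in\{0,1\}$ need no separate treatment, since any pair in $K$ already gives the value $1$ there.
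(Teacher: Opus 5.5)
Your proof is correct and follows the paper's argument essentially verbatim: the Birkhoff-orthogonal set is closed in the compact set $S_X\times S_X$ by passing to the limit pointwise in $\|x_n\|\le\|x_n+ty_n\|$, and the continuous function $m_\lambda$ then attains its maximum on it. Your explicit Hahn--Banach check that the admissible set is nonempty (using $\dim X\ge 2$) is a point the paper leaves implicit, and it is a worthwhile addition because the extreme value theorem requires a nonempty domain.
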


\begin{proof}
	Since \(X\) is finite-dimensional, \(S_X\times S_X\) is compact.
	
	Define
	\[
	\mathcal O
	=
	\{(x,y)\in S_X\times S_X:x\perp_B y\}.
	\]
	We first show that \(\mathcal O\) is closed. Suppose
	\(
	(x_n,y_n)\in\mathcal O,\) and
	\(x_n\to x, y_n\to y.\)
	
	For every \(t\in\mathbb R\),
	\[
	\|x_n\|\leq\|x_n+ty_n\|.
	\]
	Passing to the limit gives
	\[
	\|x\|\leq\|x+ty\|.
	\]
	Therefore \(x\perp_B y\), so \((x,y)\in\mathcal O\).
	Thus \(\mathcal O\) is closed, and hence compact.
	
	Now the function
	\[
	(x,y)\mapsto
	\min\left\{
	\|\lambda x+(1-\lambda)y\|,
	\|\lambda x-(1-\lambda)y\|
	\right\}
	\]
	is continuous on \(\mathcal O\). Therefore it attains its maximum on
	\(\mathcal O\). This maximum is precisely \(J_\lambda^\perp(X)\).
\end{proof}

\begin{example} If $X$ contains an isometric copy of $\ell_{1}^{2}$, then for every $\lambda \in[0,1]$,
	
	$$
	J_{\lambda}^{\perp}(X)=1 .
	$$
\end{example}

\begin{proof} Let $E \subset X$ be an isometric copy of $\ell_{1}^{2}$. By monotonicity with respect to subspaces,
	
	$$
	J_{\lambda}^{\perp}(X) \geq J_{\lambda}^{\perp}(E) .
	$$
	
	In $\ell_{1}^{2}$, take
	
	$$
	x=(1,0), \quad y=(0,1) .
	$$
	
	Then $x \perp_{B} y$, and
	
	$$
	\|\lambda x \pm(1-\lambda) y\|_{1}=\lambda+(1-\lambda)=1
	$$
	
	Thus
	$
	J_{\lambda}^{\perp}(E)=1 .
	$
	
	Since always $J_{\lambda}^{\perp}(X) \leq 1$, we conclude that
	$
	J_{\lambda}^{\perp}(X)=1.
	$
\end{proof}

\begin{example} Let $1 \leq p<\infty$ and $n \geq 2$. Then
	
	$$
	J_{\lambda}^{\perp}\left(\ell_{p}^{n}\right) \geq\left(\lambda^{p}+(1-\lambda)^{p}\right)^{1 / p}.
	$$
\end{example}
\begin{proof} Let
	
	$$
	e_{1}=(1,0, \ldots, 0), \quad e_{2}=(0,1,0, \ldots, 0) .
	$$
	
	Then $e_{1}, e_{2} \in S_{\ell_{p}^{n}}$. Moreover,
	
	$$
	e_{1} \perp_{B} e_{2},
	$$
	
	because for every $t \in \mathbb{R}$,
	
	$$
	\left\|e_{1}+t e_{2}\right\|_{p}=\left(1+|t|^{p}\right)^{1 / p} \geq 1=\left\|e_{1}\right\|_{p} .
	$$
	
	Hence
	
	$$
	\left\|\lambda e_{1} \pm(1-\lambda) e_{2}\right\|_{p}=\left(\lambda^{p}+(1-\lambda)^{p}\right)^{1 / p} .
	$$
	
	Therefore
	
	$$
	J_{\lambda}^{\perp}\left(\ell_{p}^{n}\right) \geq\left(\lambda^{p}+(1-\lambda)^{p}\right)^{1 / p}.
	$$
\end{proof}

\begin{lemma}\cite{16}\label{l11} A normed linear space $X$, whose dimension is at least three, is an inner product space if and only if Birkhoff orthogonality is symmetric in $X$.
\end{lemma}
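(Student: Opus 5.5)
The forward direction is immediate, and the converse I would reduce to three-dimensional subspaces and finish with Kakutani's characterization of inner product spaces via norm-one projections.

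\emph{Forward direction.} If the norm comes from an inner product, expand $\|x+ty\|^2=\|x\|^2+2t\langle x,y\rangle+t^2\|y\|^2$. This is at least $\|x\|^2$ for all $t\in\mathbb R$ exactly when $\langle x,y\rangle=0$. So $x\perp_B y$ coincides with usual orthogonality, which is symmetric.

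\emph{Reduction to dimension three.} Assume $\perp_B$ is symmetric in $X$ with $\dim X\ge 3$. By the Jordan--von Neumann theorem, it suffices to verify the parallelogram law for every pair $u,v\in X$. Any such pair lies in some three-dimensional subspace $F\subset X$. Birkhoff orthogonality of vectors of $F$ is the same whether computed in $F$ or in $X$, as in the proof of the two-dimensional reduction principle above. Hence $\perp_B$ is still symmetric in $F$, and it is enough to prove that $F$ is Euclidean.

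\emph{Norm-one projections in $F$.} Let $H\subset F$ be any two-dimensional subspace, and write $H=\ker f$ for some nonzero $f\in F^*$. Since $\dim F<\infty$, the functional $f$ attains its norm at some $x\in S_F$. Then for every $h\in H$ and $t\in\mathbb R$,
\[
\|x+th\|\ \ge\ \frac{f(x+th)}{\|f\|}\ =\ \frac{f(x)}{\|f\|}\ =\ 1=\|x\|,
\]
so $x\perp_B h$ for all $h\in H$. Symmetry gives $h\perp_B x$, that is, $\|h\|\le\|h+sx\|$ for all $s$. Now let $P$ be the projection of $F=H\oplus\operatorname{span}\{x\}$ onto $H$ along $x$. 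For $h\in H$ we have $P(h+sx)=h$, so the inequality says exactly that $\|P\|=1$. Thus every two-dimensional subspace of the three-dimensional space $F$ is the range of a norm-one projection.

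\emph{Conclusion and main obstacle.} By Kakutani's theorem (equivalently Blaschke's characterization of the ellipsoid), a normed space of dimension at least $3$ in which every two-dimensional subspace is $1$-complemented is an inner product space. Hence $F$ is Euclidean, the parallelogram law holds in $X$, and we are done.

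The main obstacle is this last step: Kakutani's theorem is a deep convex-geometric input that I would cite rather than reprove. The only other point needing care is that the norm-attainment argument uses $\dim F<\infty$. That is why the reduction to three-dimensional subspaces is done first, rather than working with hyperplanes of $X$ directly.
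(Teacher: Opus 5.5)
The paper gives no proof of this lemma. It is quoted from \cite{16} and used only as a black box in the proof of Theorem~\ref{t11}, so there is no in-paper argument to match.

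Your proof is correct and is essentially the standard argument in the literature. It runs in four steps:
\begin{enumerate}
\item Reduce to a three-dimensional subspace $F$. This is legitimate, since $\perp_B$ between vectors of $F$ is intrinsic to $F$, and the parallelogram law involves only two vectors at a time.
\item Use a norm-attaining $f$ with $\ker f=H$ to get $x\perp_B H$.
\item Let symmetry turn this into $h\perp_B x$ for all $h\in H$. That is literally the statement $\|P\|=1$ for the projection onto $H$ along $x$.
\item Finish with Kakutani's theorem.
\end{enumerate}

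Two details you leave implicit should be stated. First, choose $x$ (replacing it by $-x$ if needed) so that $f(x)=\|f\|$ rather than $-\|f\|$. Second, note that $x\notin H$ because $f(x)\neq 0$, so the decomposition $F=H\oplus\operatorname{span}\{x\}$ makes sense.

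Your route shows something the bare citation does not: where $\dim X\ge 3$ enters. Every step before Kakutani works in any dimension. But in a two-dimensional space every line is already the range of a norm-one projection by Hahn--Banach, so the projection property carries no information there. This is consistent with the existence of non-Euclidean Radon planes, in which $\perp_B$ is symmetric. All the depth therefore sits in Kakutani's theorem, which you reasonably cite rather than reprove.
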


Based on the properties of Birkhoff orthogonality, we can obtain the following lemma.

\begin{lemma}\label{p2} Suppose that

$$
u, v \in S_X, \quad u \perp_B v \quad \Longrightarrow \quad v \perp_B u .
$$

Then

$$
x, y \in X, \quad x \perp_B y \quad \Longrightarrow \quad y \perp_B x .
$$
\end{lemma}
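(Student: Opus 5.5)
The plan is to use the homogeneity of Birkhoff--James orthogonality: for any scalars $\alpha,\beta\in\mathbb R$, $x\perp_B y$ implies $\alpha x\perp_B \beta y$. We then reduce the general case to unit vectors by normalizing.

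First we dispose of the degenerate cases. Let $x,y\in X$ with $x\perp_B y$.
\begin{itemize}
\item If $y=0$, then $\|0\|=0\le \|y+tx\|$ for every $t$, so $y\perp_B x$ trivially.
\item If $x=0$, then $\|y+t\cdot 0\|=\|y\|$ for every $t$, so again $y\perp_B x$.
\end{itemize}
Hence we may assume $x\neq 0$ and $y\neq 0$, and set $u=x/\|x\|$ and $v=y/\|y\|$, both in $S_X$.

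Next we check that $u\perp_B v$. For every $t\in\mathbb R$,
\[
\|u+tv\|=\frac{1}{\|x\|}\Big\|x+\frac{t\|x\|}{\|y\|}\,y\Big\|\ge \frac{\|x\|}{\|x\|}=1=\|u\|.
\]
The inequality uses the hypothesis $x\perp_B y$ with the real parameter $s=t\|x\|/\|y\|$. As $t$ ranges over $\mathbb R$, so does $s$, so no values are lost.

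By the assumption on unit vectors, $v\perp_B u$, that is, $1\le \|v+su\|$ for every $s\in\mathbb R$. Then for every $t\in\mathbb R$,
\[
\|y+tx\|=\|y\|\,\Big\|v+\frac{t\|x\|}{\|y\|}\,u\Big\|\ge \|y\|.
\]
This says exactly that $y\perp_B x$.

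There is no real obstacle here. The only point needing care is the rescaling of the parameter $t$. It must be a bijection of $\mathbb R$, which holds because $\|x\|,\|y\|>0$, and this is precisely why the zero vectors are handled separately at the start.
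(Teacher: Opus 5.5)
Your proof is correct and complete, including the separate handling of $x=0$ and $y=0$, where the rescaling of the parameter would otherwise break down. The paper gives no written proof of this lemma; it only says the lemma follows from the basic properties of Birkhoff orthogonality, and your argument (homogeneity of $\perp_B$ combined with normalization to $S_X$) is exactly the argument that remark points to.
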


The characterization of inner product spaces is an important topic in Banach space theory, and the relevant results are presented below.
\begin{theorem}\label{t11}
	Let $X$ be a real normed space with $\dim X\ge3$. Then the following assertions
	are equivalent.
	
	\begin{enumerate}
		\item $X$ is an inner product space.
		
		\item For every $x,y\in S_X$ with $x\perp_B y$ and every $0<\lambda<1$,
		\[
	m_\lambda(x, y)
		=
		\sqrt{\lambda^2+(1-\lambda)^2}.
		\]
		
		\item For every $x,y\in S_X$ with $x\perp_B y$ and every $0<\lambda<1$,
		\[
	m_\lambda(x, y)
		\ge
		\sqrt{\lambda^2+(1-\lambda)^2}.
		\]
		
		\item For every $x,y\in S_X$ with $x\perp_B y$ and every $t>0$,
		\[
		\min\{\|x+t y\|,\|x-t y\|\}
		\ge
		\sqrt{1+t^2}.
		\]
	\end{enumerate}
\end{theorem}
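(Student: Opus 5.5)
The proof will follow the cycle (1)$\Rightarrow$(2)$\Rightarrow$(3)$\Rightarrow$(4)$\Rightarrow$(1).

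For (1)$\Rightarrow$(2): in an inner product space, $x\perp_B y$ is equivalent to $\langle x,y\rangle=0$. Hence
\[
\|\lambda x\pm(1-\lambda)y\|^2=\lambda^2+(1-\lambda)^2 ,
\]
so both norms equal $\sqrt{\lambda^2+(1-\lambda)^2}$ and so does their minimum. The implication (2)$\Rightarrow$(3) is trivial.

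For (3)$\Rightarrow$(4), we reparametrize. Given $t>0$, put $\lambda=1/(1+t)\in(0,1)$, so that $(1-\lambda)/\lambda=t$. Then
\[
\|x\pm ty\|=\lambda^{-1}\|\lambda x\pm(1-\lambda)y\| .
\]
Dividing the inequality in (3) by $\lambda$ and using $\lambda^{-1}\sqrt{\lambda^2+(1-\lambda)^2}=\sqrt{1+t^2}$ gives (4).

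The substantive step is (4)$\Rightarrow$(1). Since $\dim X\ge 3$, Lemma \ref{l11} reduces it to showing that Birkhoff orthogonality is symmetric. By Lemma \ref{p2}, it then suffices to treat unit vectors: if $u,v\in S_X$ and $u\perp_B v$, show $v\perp_B u$, i.e. $\|v+su\|\ge1$ for all $s\in\mathbb R$. Condition (4) with $t=|s|$, applied to the pair $(u,v)$, gives $\|u\pm |s|v\|\ge\sqrt{1+s^2}$, but this bounds the wrong combination. So the plan is to argue by contradiction.

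Suppose $\|v+s_0u\|<1$ for some $s_0\neq0$. Then $v$ is not Birkhoff orthogonal to $u$. In the two-dimensional subspace $E=\operatorname{span}\{u,v\}$ we can choose $w\in S_E$ with $v\perp_B w$. This is possible by existence of Birkhoff orthogonal directions in two dimensions, taking $w$ along a supporting line of $B_E$ at $v$. Write $w=\alpha u+\beta v$; the failure of $v\perp_B u$ forces $\beta\neq0$. Now apply (4) to the orthogonal pair $(v,w)$, and also use the given orthogonality $u\perp_B v$ via (4) on $(u,v)$. The aim is to derive two incompatible estimates on a quantity such as $\|v+s_0u\|$. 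For instance, expressing $u$ in terms of $v$ and $w$ yields a lower bound
\[
\|v+s_0u\|\ge\sqrt{1+c^2}\ \cdot(\text{scale})\ \ge 1
\]
for suitable $c$, which contradicts $\|v+s_0u\|<1$.

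I expect this coefficient bookkeeping to be the main obstacle. If it does not close cleanly, the fallback is a known route: in a two-dimensional space, (4) says every Birkhoff orthogonal pair satisfies $\|x+ty\|\ge\sqrt{1+t^2}$ for all real $t$, using both signs. Together with the isosceles-type relation $\|x+ty\|=\|x-ty\|$ that (4) forces at the boundary, this gives Birkhoff orthogonality implying isosceles orthogonality. That implication is a classical characterization of inner product spaces (Amir's book), which would also yield (1) directly.
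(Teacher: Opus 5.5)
\textbf{There is a genuine gap in (4)$\Rightarrow$(1).} Your implications (1)$\Rightarrow$(2)$\Rightarrow$(3)$\Rightarrow$(4) are correct and match the paper. Your reduction of (4)$\Rightarrow$(1) to showing $v\perp_B u$ for unit vectors with $u\perp_B v$, via Lemma \ref{p2} and Lemma \ref{l11}, is also the paper's.

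The gap is in that key step. You set the direct use of (4) aside because you plugged in $t=|s|$. With $t=1/|s|$, however, (4) bounds exactly the combination you need. For $s\neq0$ we have $v+su=s\,(u+s^{-1}v)$, so
\[
\|v+su\|=|s|\,\bigl\|u+s^{-1}v\bigr\|\ge |s|\sqrt{1+s^{-2}}=\sqrt{1+s^2}\ge 1 .
\]
The inequality is (4) applied to the pair $(u,v)$ with $t=1/|s|$. You use the ``$+$'' branch of the minimum when $s>0$ and the ``$-$'' branch when $s<0$. This gives $v\perp_B u$ at once, and it is the paper's entire argument.

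Neither of your replacement routes closes. In the contradiction route, take $w=\alpha u+\beta v$ (with $\alpha\neq0$). Then $v+s_0u=(1-s_0\beta/\alpha)v+(s_0/\alpha)w$. Applying (4) to $(v,w)$ gives only
\[
\|v+s_0u\|\ge\sqrt{(1-s_0\beta/\alpha)^2+s_0^2/\alpha^2}.
\]
At $s_0=\alpha\beta/(1+\beta^2)$ this bound equals $(1+\beta^2)^{-1/2}<1$ whenever $\beta\neq0$. So the ``$\ge 1$'' you need is not produced, and the ``suitable $c$'' and ``scale'' are never identified.

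The fallback is also unsupported. Condition (4) gives only the lower bounds $\|x\pm ty\|\ge\sqrt{1+t^2}$. It does not force the equality $\|x+y\|=\|x-y\|$, so you have not shown that Birkhoff orthogonality implies isosceles orthogonality. That equality becomes available only after you already know $X$ is an inner product space.

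As written, (4)$\Rightarrow$(1) is unproved. Replacing both routes with the one-line rescaling above fixes it.
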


\begin{proof}
	\textup{(1) $\Rightarrow$ (2).}
	Assume $X$ is an inner product space. Then $x\perp_B y$ if and only if
	$\langle x,y\rangle=0$. Therefore, for $x,y\in S_X$ with $x\perp_B y$,
	\[
	\|\lambda x\pm(1-\lambda)y\|^2
	=
	\lambda^2\|x\|^2+(1-\lambda)^2\|y\|^2
	=
	\lambda^2+(1-\lambda)^2.
	\]
	Taking the minimum gives
	\[
m_\lambda(x, y)
	=
	\sqrt{\lambda^2+(1-\lambda)^2}.
	\]
	
	\textup{(2) $\Rightarrow$ (3).}
	This is immediate.
	
	\textup{(3) $\Leftrightarrow$ (4).}
	Let
	\[
	t=\frac{1-\lambda}{\lambda}.
	\]
	Then
	\[
	\lambda=\frac1{1+t},
	\qquad
	1-\lambda=\frac{t}{1+t}.
	\]
	Hence
	\[
	m_\lambda(x, y)
	=
	\frac1{1+t}
	\min\{\|x+t y\|,\|x-t y\|\}.
	\]
	Also,
	\[
	\sqrt{\lambda^2+(1-\lambda)^2}
	=
	\frac{\sqrt{1+t^2}}{1+t}.
	\]
	Thus the inequality in \textup{(3)} is exactly the inequality in
	\textup{(4)}. Since $0<\lambda<1$ corresponds bijectively to $t>0$, the two
	conditions are equivalent.
	
	\textup{(4) $\Rightarrow$ (1).}
	Take $x,y\in S_X$ with $x\perp_B y$. We need to show that
	\[
	y\perp_B x,
	\]
	that is,
	\[
	\|y+s x\|\ge1
	\qquad(s\in\R).
	\]
	For $s=0$ this is trivial. Let $s\ne0$. Write
	\[
	\|y+s x\|
	=
	|s|\left\|x+\frac{1}{s}y\right\|.
	\]
	Put
	\(
	t=\frac1{|s|}>0.
	\)
	By \textup{(4)},
	\[
	\min\{\|x+t y\|,\|x-t y\|\}
	\ge
	\sqrt{1+t^2}.
	\]
	Therefore both $\|x+t y\|$ and $\|x-t y\|$ are at least $\sqrt{1+t^2}$. We get
	\[
	\left\|x+\frac{1}{s}y\right\|
	\ge
	\sqrt{1+\frac1{s^2}}.
	\]
	Consequently,
	\[
	\|y+s x\|
	=
	|s|\left\|x+\frac{1}{s}y\right\|
	\ge
	|s|\sqrt{1+\frac1{s^2}}
	=
	\sqrt{1+s^2}
	\ge1,
	\]
so $y\perp_B x$. 	 By  Lemma \ref{l11} and  Lemma \ref{p2}, we can deduce that $X$ is an inner product space, as desired.
\end{proof}

\begin{proposition}
	If $X$ is a Hilbert space with $dim H\geq2$, then
	\[
	J_\lambda^\perp(H)
	=
	\sqrt{\lambda^2+(1-\lambda)^2}
	\]
	for every $0<\lambda<1$.
\end{proposition}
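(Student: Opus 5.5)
The plan is to show that every admissible pair gives the same value, so the supremum equals that common value. In a Hilbert space the norm comes from an inner product. For $x,y\in S_H$, Birkhoff--James orthogonality $x\perp_B y$ is equivalent to $\langle x,y\rangle=0$, and we recall the short argument for this. The function $t\mapsto \|x+ty\|^2=1+2t\langle x,y\rangle+t^2$ has its minimum at $t=0$ exactly when $\langle x,y\rangle=0$.

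Next, for any $x,y\in S_H$ with $x\perp_B y$ and any $0<\lambda<1$, expand
\[
\|\lambda x\pm(1-\lambda)y\|^2=\lambda^2\pm2\lambda(1-\lambda)\langle x,y\rangle+(1-\lambda)^2=\lambda^2+(1-\lambda)^2 .
\]
So $m_\lambda(x,y)=\sqrt{\lambda^2+(1-\lambda)^2}$ for every admissible pair. This is precisely the implication (1)$\Rightarrow$(2) in Theorem \ref{t11}. Its proof does not use $\dim X\ge 3$, so it can be quoted directly, or the one-line computation can be repeated.

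Because every admissible pair gives the same value, the supremum is that value, provided the admissible set is nonempty. This is where $\dim H\ge 2$ is used. Choose two orthonormal vectors $e_1,e_2$ (for instance by Gram--Schmidt on two independent vectors). Then $e_1\perp_B e_2$, and hence
\[
J_\lambda^\perp(H)=\sqrt{\lambda^2+(1-\lambda)^2}.
\]

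There is no real obstacle. The only points needing care are that the admissible set is nonempty, which requires dimension at least two, and that the Hilbert-space direction of Theorem \ref{t11} does not depend on its dimension-three hypothesis.
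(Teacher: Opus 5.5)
Your proof is correct and follows the paper's argument: Birkhoff orthogonality is identified with $\langle x,y\rangle=0$, and expanding $\|\lambda x\pm(1-\lambda)y\|^2=\lambda^2+(1-\lambda)^2$ shows that every admissible pair gives the same value. You also supply two details the paper leaves implicit: the quadratic-minimum justification of that identification, and the use of $\dim H\ge 2$ to guarantee an admissible orthonormal pair, so the supremum is not over an empty set.
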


\begin{proof}
	In a Hilbert space, Birkhoff orthogonality coincides with inner
	product orthogonality. Thus \(x\perp_B y\) means
	\(
	\langle x,y\rangle=0.
	\)
	
	For \(x,y\in S_H\) with \(x\perp_B y\), we have
	\[
	\|\lambda x+(1-\lambda)y\|^2
	=
	\lambda^2\|x\|^2+(1-\lambda)^2\|y\|^2
	=
	\lambda^2+(1-\lambda)^2.
	\]
	Similarly,
	\[
	\|\lambda x-(1-\lambda)y\|^2
	=
	\lambda^2+(1-\lambda)^2.
	\]
	Hence
	\[
	\min\{
	\|\lambda x+(1-\lambda)y\|,
	\|\lambda x-(1-\lambda)y\|
	\}
	=
	\sqrt{\lambda^2+(1-\lambda)^2}.
	\]
	Taking the supremum gives
	\[
	J_\lambda^\perp(H)
	=
	\sqrt{\lambda^2+(1-\lambda)^2}.
	\]

\end{proof}

\section{Several inequalities and some applications}

To establish the intrinsic relationship between the new constant and the classical orthogonal James constant, we derive the following two-sided inequality.
\begin{proposition}\label{p1}
For every real Banach space \(X\) and every \(\lambda\in[0,1]\),
\[
\frac12J_\perp(X)-2\left|\lambda-\frac12\right|\leq J_\lambda^\perp(X)
\leq
|2\lambda-1|+\min\{\lambda,1-\lambda\}J_\perp(X).
\]
\end{proposition}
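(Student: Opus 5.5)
Both inequalities will be proved pointwise for a fixed Birkhoff orthogonal pair \(x,y\in S_X\), \(x\perp_B y\), comparing \(m_\lambda(x,y)\) with \(m_{1/2}(x,y)=\tfrac12\min\{\|x+y\|,\|x-y\|\}\). Taking suprema and using the Remark \(J_{1/2}^\perp(X)=\tfrac12J_\perp(X)\) then gives the result.

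\emph{Lower bound.} For each sign,
\[
\lambda x\pm(1-\lambda)y-\tfrac12(x\pm y)=\left(\lambda-\tfrac12\right)x\mp\left(\lambda-\tfrac12\right)y,
\]
and this vector has norm at most \(2|\lambda-\tfrac12|\). Hence
\[
\|\lambda x\pm(1-\lambda)y\|\ge \tfrac12\|x\pm y\|-2\left|\lambda-\tfrac12\right|.
\]
Taking the minimum over the two signs gives \(m_\lambda(x,y)\ge m_{1/2}(x,y)-2|\lambda-\tfrac12|\). Since \(m_\lambda(x,y)\le J_\lambda^\perp(X)\), taking the supremum over orthogonal pairs gives \(\tfrac12J_\perp(X)-2|\lambda-\tfrac12|\le J_\lambda^\perp(X)\). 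This bound uses no orthogonality beyond restricting the supremum.

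\emph{Upper bound.} We may assume \(\lambda\le\tfrac12\); the other case is symmetric, with the roles of the two coefficients exchanged. Write \(\mu=\min\{\lambda,1-\lambda\}=\lambda\), so that
\[
\lambda x\pm(1-\lambda)y=\lambda(x\pm y)\pm(1-2\lambda)y.
\]
By the triangle inequality, \(\|\lambda x\pm(1-\lambda)y\|\le \lambda\|x\pm y\|+|2\lambda-1|\). Taking the minimum over signs gives
\[
m_\lambda(x,y)\le |2\lambda-1|+\lambda\min\{\|x+y\|,\|x-y\|\}\le |2\lambda-1|+\mu J_\perp(X).
\]
For \(\lambda\ge\tfrac12\), write \(\lambda x\pm(1-\lambda)y=(1-\lambda)(x\pm y)+(2\lambda-1)x\) and argue the same way. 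The key point is that the same sign choice achieves the minimum on both sides: we bound the minimum of the sum by the sum with the minimizing sign, which works because the extra term has the same norm for both signs. The endpoints \(\lambda=0,1\) give right side \(1\) and left side \(\tfrac12J_\perp(X)-1\le 0\), consistent with \(J_0^\perp=J_1^\perp=1\).

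There is no real obstacle. The only care needed is choosing the decomposition so that the coefficient in front of \(x\pm y\) is \(\min\{\lambda,1-\lambda\}\) rather than \(\tfrac12\), and checking that the minimum over signs passes through correctly.
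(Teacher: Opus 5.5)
Your proof is correct and follows the paper's argument essentially step for step. You use the same decompositions: $\lambda x\pm(1-\lambda)y=\tfrac12(x\pm y)+(\lambda-\tfrac12)(x\mp y)$ for the lower bound, and $\min\{\lambda,1-\lambda\}(x\pm y)$ plus a remainder of norm $|2\lambda-1|$ for the upper bound, each followed by taking the minimum over signs and then the supremum over Birkhoff orthogonal pairs.
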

\begin{proof}
	Let \(x,y\in S_X\) satisfy \(x\perp_B y\).
	
	First assume \(0\leq\lambda\leq 1/2\). Then \(1-\lambda\geq \lambda\), and
	\[
	\lambda x+(1-\lambda)y
	=
	\lambda(x+y)+(1-2\lambda)y.
	\]
	It follows that
	\[
	\|\lambda x+(1-\lambda)y\|
	\leq
	\lambda\|x+y\|+(1-2\lambda).
	\]
	Similarly,
	\[
	\lambda x-(1-\lambda)y
	=
	\lambda(x-y)-(1-2\lambda)y,
	\]
	which implies
	\[
	\|\lambda x-(1-\lambda)y\|
	\leq
	\lambda\|x-y\|+(1-2\lambda).
	\]
	Hence,
	\[
	\min\{
	\|\lambda x+(1-\lambda)y\|,
	\|\lambda x-(1-\lambda)y\|
	\}
	\leq
	1-2\lambda+\lambda\min\{\|x+y\|,\|x-y\|\}.
	\]
	Taking the supremum yields
	\[
	J_\lambda^\perp(X)
	\leq
	1-2\lambda+\lambda J_\perp(X).
	\]
	
	Next, assume \(1/2\leq\lambda\leq 1\). Then \(\lambda\geq1-\lambda\), and
	\[
	\lambda x+(1-\lambda)y
	=
	(1-\lambda)(x+y)+(2\lambda-1)x.
	\]
	Likewise,
	\[
	\lambda x-(1-\lambda)y
	=
	(1-\lambda)(x-y)+(2\lambda-1)x.
	\]
	Consequently,
	\[
	\min\{
	\|\lambda x+(1-\lambda)y\|,
	\|\lambda x-(1-\lambda)y\|
	\}
	\leq
	2\lambda-1+(1-\lambda)\min\{\|x+y\|,\|x-y\|\}.
	\]
	Taking the supremum gives
	\[
	J_\lambda^\perp(X)
	\leq
	2\lambda-1+(1-\lambda)J_\perp(X).
	\]
	Combining these two cases establishes the desired upper bound.
	
	For the reverse inequality, let \(x,y\in S_X\) with \(x\perp_B y\). We rewrite
	\[
	\lambda x+(1-\lambda)y
	=
	\frac12(x+y)+\left(\lambda-\frac12\right)(x-y).
	\]
	Accordingly,
	\[
	\|\lambda x+(1-\lambda)y\|
	\geq
	\frac12\|x+y\|
	-
	\left|\lambda-\frac12\right|\|x-y\|.
	\]
	Since \(\|x-y\|\leq2\), we obtain
	\[
	\|\lambda x+(1-\lambda)y\|
	\geq
	\frac12\|x+y\|-2\left|\lambda-\frac12\right|.
	\]
	Similarly,
	\[
	\lambda x-(1-\lambda)y
	=
	\frac12(x-y)+\left(\lambda-\frac12\right)(x+y),
	\]
	which yields
	\[
	\|\lambda x-(1-\lambda)y\|
	\geq
	\frac12\|x-y\|-2\left|\lambda-\frac12\right|.
	\]
	Thus,
	\[
	\min\{
	\|\lambda x+(1-\lambda)y\|,
	\|\lambda x-(1-\lambda)y\|
	\}
	\geq
	\frac12\min\{\|x+y\|,\|x-y\|\}
	-
	2\left|\lambda-\frac12\right|.
	\]
	Taking the supremum over all Birkhoff--James orthogonal pairs, we conclude
	\[
	J_\lambda^\perp(X)
	\geq
	\frac12 J_\perp(X)-2\left|\lambda-\frac12\right|.
	\]
\end{proof}

\begin{corollary}
For every real Banach space \(X\) and every \(\lambda\in[0,1]\),
\[
J(X)-1-2\left|\lambda-\frac12\right|\leq J_\lambda^\perp(X)
\leq
|2\lambda-1|+\min\{\lambda,1-\lambda\}J(X).
\]

\end{corollary}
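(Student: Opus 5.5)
The natural route is to combine Proposition \ref{p1} with a known two-sided comparison between $J_\perp(X)$ and the classical James constant $J(X)$. Specifically, I will use
\[
2(J(X)-1)\leq J_\perp(X)\leq J(X).
\]
The right inequality is immediate from the definitions, since restricting to Birkhoff orthogonal pairs can only lower the supremum. Substituting it into the upper bound of Proposition \ref{p1}, and using $\min\{\lambda,1-\lambda\}\geq0$, gives directly
\[
J_\lambda^\perp(X)\leq |2\lambda-1|+\min\{\lambda,1-\lambda\}J(X).
\]
Substituting the left inequality into the lower bound of Proposition \ref{p1} gives
\[
J_\lambda^\perp(X)\geq \tfrac12 J_\perp(X)-2|\lambda-\tfrac12|\geq J(X)-1-2|\lambda-\tfrac12|,
\]
which is exactly the claim.

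The whole proof therefore reduces to justifying $J_\perp(X)\geq 2(J(X)-1)$. This is known from the literature on the orthogonal James constant (cf.\ \cite{10}), and I would cite it there if available. Otherwise I would prove it directly, as follows.

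Take $u,v\in S_X$ with $\min\{\|u+v\|,\|u-v\|\}>J(X)-\varepsilon$. Set $x=(u+v)/\|u+v\|$ and $w=u-v$. Birkhoff orthogonal vectors always exist: for fixed $x$ and $w$ there is $a\in\mathbb R$ such that $x\perp_B (w+ax)$, namely $-a$ chosen as a minimizer of $t\mapsto\|w-tx\|$ combined with the standard existence result in two-dimensional spaces. Normalizing $y=(w+ax)/\|w+ax\|$, one then estimates $\|x\pm y\|$ from below via the triangle inequality. The target is $\min\{\|x+y\|,\|x-y\|\}\geq 2(J(X)-1)-O(\varepsilon)$, using that $\|u\pm v\|$ are both close to $J(X)$ and bounded by $2$.

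The main obstacle is this estimate. The perturbation $ax$ and the normalizations must be controlled carefully, and the constant $2(J(X)-1)$ is exactly what survives these losses. If the direct estimate turns out messy, I would fall back on citing the inequality $J_\perp(X)\geq 2J(X)-2$ from the orthogonal James constant literature.
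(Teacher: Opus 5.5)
Your argument is correct and is exactly the paper's route: Proposition~\ref{p1} combined with the trivial bound $J_\perp(X)\le J(X)$ (using $\min\{\lambda,1-\lambda\}\ge 0$) gives the upper estimate, and Proposition~\ref{p1} combined with the inequality $J_\perp(X)\ge 2J(X)-2$ quoted from \cite{10} gives the lower one. Do not rely on your fallback direct construction, though: minimizing $t\mapsto\|w-tx\|$ yields $(w-t_0x)\perp_B x$ rather than $x\perp_B (w-t_0x)$ (a supporting functional at $x$ is the right tool), and in $\ell_\infty^2$ with $u=(1,1)$, $v=(1,-1)$ the construction forces $x=(1,0)$, $y=(0,\pm1)$, so $\min\{\|x+y\|,\|x-y\|\}=1<2=2(J(X)-1)$ and the cited inequality is genuinely needed.
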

\begin{proof}
	The set over which the supremum for 
\(J_\perp(X)\)
	is defined is a subset of that for \(J(X)\)
	, so we immediately obtain
\[
J_\perp(X)\leq J(X).
\]
The desired conclusion then follows from the Proposition \ref{p1}.

Furthermore, it is known from \cite{10} that  \(
J_{\perp}(X)\geq 2J(X)-2.
\) Combining this inequality yields
\[
J_\lambda^\perp(X)\geq \frac12J_\perp(X)-2\left|\lambda-\frac12\right|\geq J(X)-1-2\left|\lambda-\frac12\right|. \]
\end{proof}

\begin{corollary}
	If
	\(
	J_\perp(X)\leq 2-\delta\)
	for some \(\delta>0\), then for every \(0<\lambda<1\),
	\[
	J_\lambda^\perp(X)
	\leq
	1-\delta\min\{\lambda,1-\lambda\}.
	\]
\end{corollary}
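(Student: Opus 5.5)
This is a direct consequence of the upper bound in Proposition \ref{p1}:
\[
J_\lambda^\perp(X)\leq |2\lambda-1|+\min\{\lambda,1-\lambda\}J_\perp(X).
\]
The plan is to insert the hypothesis \(J_\perp(X)\leq 2-\delta\) and then simplify the right-hand side into the claimed form. Since \(\min\{\lambda,1-\lambda\}\geq 0\), the hypothesis can be substituted monotonically, giving
\[
J_\lambda^\perp(X)\leq |2\lambda-1|+2\min\{\lambda,1-\lambda\}-\delta\min\{\lambda,1-\lambda\}.
\]

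The only remaining point is the identity \(|2\lambda-1|+2\min\{\lambda,1-\lambda\}=1\) for \(\lambda\in[0,1]\). I would check it by splitting into two cases.

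\begin{itemize}
\item If \(0<\lambda\leq\frac12\), then \(|2\lambda-1|=1-2\lambda\) and \(\min\{\lambda,1-\lambda\}=\lambda\). The sum is \(1-2\lambda+2\lambda=1\).
\item If \(\frac12\leq\lambda<1\), then \(|2\lambda-1|=2\lambda-1\) and \(\min\{\lambda,1-\lambda\}=1-\lambda\). The sum is \(2\lambda-1+2-2\lambda=1\).
\end{itemize}

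This is the same case split used in the proof of Proposition \ref{p1}, so one could equally cite the two displayed bounds \(1-2\lambda+\lambda J_\perp(X)\) and \(2\lambda-1+(1-\lambda)J_\perp(X)\) from that proof directly. Substituting the identity gives \(J_\lambda^\perp(X)\leq 1-\delta\min\{\lambda,1-\lambda\}\).

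There is no real obstacle. The only care needed is the nonnegativity of the coefficient \(\min\{\lambda,1-\lambda\}\), which justifies substituting the upper bound on \(J_\perp(X)\). The restriction \(0<\lambda<1\) makes this coefficient strictly positive, so the resulting bound is strictly below \(1\). That is the point of the corollary, though it is not needed for the inequality itself.
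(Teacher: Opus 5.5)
Your proposal is correct and follows the paper's proof: the paper also starts from the upper bound in Proposition \ref{p1}, written as \(1-J_\lambda^\perp(X)\geq \min\{\lambda,1-\lambda\}\bigl(2-J_\perp(X)\bigr)\), and inserts \(J_\perp(X)\leq 2-\delta\). That rewriting uses the identity \(|2\lambda-1|+2\min\{\lambda,1-\lambda\}=1\), which you verify explicitly by cases and the paper uses without comment.
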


\begin{proof}
	The Proposition \ref{p1}, gives
	\[
	1-J_\lambda^\perp(X)
	\geq
	\min\{\lambda,1-\lambda\}\bigl(2-J_\perp(X)\bigr).
	\]
	If \(J_\perp(X)\leq2-\delta\), then
	\[
	2-J_\perp(X)\geq\delta.
	\]
	Therefore
	\[
	1-J_\lambda^\perp(X)
	\geq
	\delta\min\{\lambda,1-\lambda\},
	\]
	which is equivalent to
	\[
	J_\lambda^\perp(X)
	\leq
	1-\delta\min\{\lambda,1-\lambda\}.
	\]
\end{proof}

The parameter-dependent geometric constant $J_\lambda^{\perp}(X)$ possesses stable continuous behavior with respect to $\lambda$. Specifically, it satisfies a 2 -Lipschitz continuity condition as follows.
It shows that the dependence of the newly defined orthogonality-type geometric constant on the parameter \(\lambda\) is stable and controllable.
\begin{proposition}
For every real Banach space \(X\), the function
\[
\lambda\mapsto J_\lambda^\perp(X)\in[0,1]
\]
is \(2\)-Lipschitz and the map
\(
\lambda\mapsto J_\lambda^\perp(X)
\)
is continuous on \([0,1]\). That is,
\[
|J_\lambda^\perp(X)-J_\mu^\perp(X)|
\leq
2|\lambda-\mu|
\]
for all \(\lambda,\mu\in[0,1]\).
\end{proposition}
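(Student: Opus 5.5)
The plan is to prove the Lipschitz estimate pointwise, for each fixed admissible pair, and then pass to the supremum. Fix \(x,y\in S_X\) with \(x\perp_B y\) and \(\lambda,\mu\in[0,1]\). For either choice of sign, the triangle inequality gives
\[
\bigl|\,\|\lambda x\pm(1-\lambda)y\|-\|\mu x\pm(1-\mu)y\|\,\bigr|
\leq \|(\lambda-\mu)x\mp(\lambda-\mu)y\|
\leq |\lambda-\mu|\bigl(\|x\|+\|y\|\bigr)=2|\lambda-\mu|.
\]
This step uses only \(\|x\|=\|y\|=1\); the orthogonality is not needed at this stage.

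Next I transfer this to the minimum. If \(|a-a'|\le c\) and \(|b-b'|\le c\), then \(|\min\{a,b\}-\min\{a',b'\}|\le c\). Applying this with the two sign choices yields
\[
m_\lambda(x,y)\le m_\mu(x,y)+2|\lambda-\mu|\le J_\mu^\perp(X)+2|\lambda-\mu|.
\]
The admissible set \(\{(x,y)\in S_X\times S_X: x\perp_B y\}\) does not depend on the parameter, so taking the supremum over all admissible pairs on the left gives
\[
J_\lambda^\perp(X)\le J_\mu^\perp(X)+2|\lambda-\mu|.
\]
Interchanging \(\lambda\) and \(\mu\) gives the reverse inequality, and together these give the stated Lipschitz bound.

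Continuity on \([0,1]\) follows immediately, since every Lipschitz function is continuous. The values lie in \([0,1]\) by the earlier proposition giving \(0\le J_\lambda^\perp(X)\le 1\).

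I do not expect a real obstacle. The only point needing care is that the supremum is taken over a parameter-independent admissible set, which is what allows the pointwise bound to pass through the supremum. This also holds at the endpoints \(\lambda\in\{0,1\}\), where \(J_0^\perp(X)=J_1^\perp(X)=1\) by the earlier remark.
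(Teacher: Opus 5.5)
Your proposal is correct and follows essentially the same argument as the paper. The paper also bounds each signed norm pointwise by \(|\lambda-\mu|\,\|x\mp y\|\le 2|\lambda-\mu|\), passes the bound to the minimum, and then takes suprema; your one-sided inequality followed by swapping \(\lambda\) and \(\mu\) just spells out the paper's brief ``taking suprema'' step.
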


\begin{proof}
Fix \(x,y\in S_X\) with \(x\perp_B y\). Then
\[
\begin{aligned}
&\left|
\|\lambda x+(1-\lambda)y\|
-
\|\mu x+(1-\mu)y\|
\right| \\
&\leq
\|(\lambda-\mu)x+((1-\lambda)-(1-\mu))y\| \\
&=
|\lambda-\mu|\|x-y\|
\leq
2|\lambda-\mu|.
\end{aligned}
\]
Similarly,
\[
\left|
\|\lambda x-(1-\lambda)y\|
-
\|\mu x-(1-\mu)y\|
\right|
\leq
2|\lambda-\mu|.
\]
Taking minima preserves the same Lipschitz bound. Therefore, for every
orthogonal pair,
\[
\left|
m_\lambda(x,y)-m_\mu(x,y)
\right|
\leq
2|\lambda-\mu|,
\]
where
\[
m_\lambda(x,y)
=
\min\{
\|\lambda x+(1-\lambda)y\|,
\|\lambda x-(1-\lambda)y\|
\}.
\]
Taking suprema gives
\[
|J_\lambda^\perp(X)-J_\mu^\perp(X)|
\leq
2|\lambda-\mu|.
\]
\end{proof}

Recall that a Banach space \(X\) is uniformly nonsquare if there exists
\(\varepsilon>0\) such that for every \(x,y\in S_X\),
\[
\min\{\|x+y\|,\|x-y\|\}\leq 2-\varepsilon.
\]
Equivalently\cite{12},
\(J(X)<2.\)

For the orthogonal James constant, one has the corresponding characterization\cite{10}
\[
X \text{ is not uniformly nonsquare}
\quad\Longleftrightarrow\quad
J_\perp(X)=2.
\]

Below we generalize the above identity to weighted spaces.
\begin{theorem}\label{t1}
Let \(0<\lambda<1\). Then
\[
J_\lambda^\perp(X)=1
\quad\Longleftrightarrow\quad
J_\perp(X)=2.
\]
Consequently,
\[
J_\lambda^\perp(X)=1
\quad\Longleftrightarrow\quad
X \text{ is not uniformly nonsquare}.
\]
\end{theorem}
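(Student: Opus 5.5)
Both directions will come from Proposition \ref{p1}, and the second equivalence will then follow from the characterization of uniform nonsquareness via $J_\perp(X)$ quoted from \cite{10}.

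For the direction $J_\lambda^\perp(X)=1 \Rightarrow J_\perp(X)=2$, I will use the upper bound in Proposition \ref{p1}. Since $|2\lambda-1| = 1-2\min\{\lambda,1-\lambda\}$ for $\lambda\in[0,1]$, that bound reads
\[
J_\lambda^\perp(X)\le 1-\min\{\lambda,1-\lambda\}\bigl(2-J_\perp(X)\bigr).
\]
If $J_\lambda^\perp(X)=1$, this gives $\min\{\lambda,1-\lambda\}\bigl(2-J_\perp(X)\bigr)\le 0$. For $0<\lambda<1$ the factor $\min\{\lambda,1-\lambda\}$ is strictly positive, so $J_\perp(X)\ge 2$. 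Combined with the trivial bound $J_\perp(X)\le 2$, this yields $J_\perp(X)=2$. This is exactly the contrapositive of the Corollary following Proposition \ref{p1}.

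For the converse, the lower bound in Proposition \ref{p1} only yields $1-2|\lambda-\tfrac12|$, which is not enough. I will therefore argue directly with near-extremal pairs. Assume $J_\perp(X)=2$ and choose $x_n,y_n\in S_X$ with $x_n\perp_B y_n$ and
\[
\min\{\|x_n+y_n\|,\|x_n-y_n\|\}\to 2.
\]
The goal is to show that $\|\lambda x_n\pm(1-\lambda)y_n\|\to 1$. Take $\lambda\le\tfrac12$ and write
\[
\lambda x_n+(1-\lambda)y_n=(x_n+y_n)-(1-\lambda)x_n-\lambda y_n.
\]
The triangle inequality then gives
\[
\|\lambda x_n+(1-\lambda)y_n\|\ge \|x_n+y_n\|-(1-\lambda)-\lambda=\|x_n+y_n\|-1\to 1.
\]
The same manipulation applies to the minus sign using $x_n-y_n$, and the identical argument covers $\lambda\ge\tfrac12$. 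Together with $m_\lambda\le 1$ from the first Proposition, this forces $J_\lambda^\perp(X)=1$.

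Finally, \cite{10} states that $X$ is not uniformly nonsquare if and only if $J_\perp(X)=2$. Substituting this into the first equivalence gives the second. The only step requiring care is the converse direction, where the weakness of the lower bound in Proposition \ref{p1} forces the direct triangle-inequality argument above.
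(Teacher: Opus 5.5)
Your proof is correct and follows the paper's route. The implication $J_\lambda^\perp(X)=1\Rightarrow J_\perp(X)=2$ comes from the upper bound in Proposition~\ref{p1}, rewritten with $1-|2\lambda-1|=2\min\{\lambda,1-\lambda\}$, and the converse comes from a direct triangle-inequality estimate on near-extremal orthogonal pairs, followed by the quoted characterization of uniform nonsquareness via $J_\perp(X)$. The only difference is your splitting $\lambda x+(1-\lambda)y=(x+y)-(1-\lambda)x-\lambda y$: it gives the $\lambda$-independent bound $m_\lambda(x,y)\ge\min\{\|x+y\|,\|x-y\|\}-1$ and removes the case split at $\lambda=\tfrac12$, whereas the paper's $\lambda(x+y)-(2\lambda-1)y$ gives the slightly sharper $1-\lambda\varepsilon$ but needs two cases.
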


\begin{proof}
Assume first that \(J_\perp(X)=2\).
Let \(\varepsilon>0\). Then there exist \(x,y\in S_X\) with \(x\perp_B y\) such that
\[
\min\{\|x+y\|,\|x-y\|\}>2-\varepsilon.
\]

Suppose \(1/2\leq\lambda<1\). Then
\[
\lambda x+(1-\lambda)y
=
\lambda(x+y)-(\lambda-(1-\lambda))y.
\]
Therefore
\[
\|\lambda x+(1-\lambda)y\|
\geq
\lambda\|x+y\|-(2\lambda-1)\|y\|
>
\lambda(2-\varepsilon)-(2\lambda-1)
=
1-\lambda\varepsilon.
\]
Similarly,
\[
\|\lambda x-(1-\lambda)y\|
>
1-\lambda\varepsilon.
\]
Hence
\[
J_\lambda^\perp(X)\geq 1-\lambda\varepsilon.
\]
Since \(\varepsilon>0\) is arbitrary and \(J_\lambda^\perp(X)\leq1\), we obtain
\[
J_\lambda^\perp(X)=1.
\]

The case \(0<\lambda\leq1/2\) is similar. In this case,
\[
\lambda x+(1-\lambda)y
=
(1-\lambda)(x+y)-((1-\lambda)-\lambda)x,
\]
so
\[
\|\lambda x+(1-\lambda)y\|
>
(1-\lambda)(2-\varepsilon)-(1-2\lambda)
=
1-(1-\lambda)\varepsilon.
\]
The same estimate holds for \(\lambda x-(1-\lambda)y\). Hence again
\[
J_\lambda^\perp(X)=1.
\]

Conversely, suppose \(J_\lambda^\perp(X)=1\) for some \(0<\lambda<1\).
By the Proposition \ref{p1},
\[
1
=
J_\lambda^\perp(X)
\leq
|2\lambda-1|+\min\{\lambda,1-\lambda\}J_\perp(X).
\]
Since
\[
1-|2\lambda-1|=2\min\{\lambda,1-\lambda\},
\]
we get
\[
2\min\{\lambda,1-\lambda\}
\leq
\min\{\lambda,1-\lambda\}J_\perp(X).
\]
Because \(0<\lambda<1\), the minimum is positive, and so
\[
J_\perp(X)\geq2.
\]
But always \(J_\perp(X)\leq2\), hence \(J_\perp(X)=2\).
\end{proof}

From the above theorem, we obtain the following two corollaries.

\begin{corollary}  Let $0<\lambda, \mu<1$. Then
	$$
	J_\lambda^{\perp}(X)=1 \quad \Longleftrightarrow \quad J_\mu^{\perp}(X)=1 .
	$$
	
\end{corollary}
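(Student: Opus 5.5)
The plan is to pass through the intermediate quantity $J_\perp(X)$ and apply Theorem \ref{t1} twice, once for $\lambda$ and once for $\mu$. Since both parameters lie in $(0,1)$, the hypothesis of Theorem \ref{t1} is satisfied for each of them separately. It yields the two equivalences
\[
J_\lambda^\perp(X)=1 \iff J_\perp(X)=2,
\qquad
J_\mu^\perp(X)=1 \iff J_\perp(X)=2 .
\]
Chaining them gives $J_\lambda^\perp(X)=1 \iff J_\mu^\perp(X)=1$.

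Concretely, I will argue in two directions. Suppose $J_\lambda^\perp(X)=1$. The converse half of Theorem \ref{t1}, which rests on the upper bound of Proposition \ref{p1} and the positivity of $\min\{\lambda,1-\lambda\}$, gives $J_\perp(X)=2$. The forward half of Theorem \ref{t1} applied at $\mu$ then gives $J_\mu^\perp(X)=1$. The reverse implication follows by exchanging the roles of $\lambda$ and $\mu$.

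Alternatively, one can phrase the result through the geometric characterization: each of the two conditions is equivalent to $X$ not being uniformly nonsquare, and this property does not depend on the parameter.

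There is essentially no obstacle here. The only point needing care is that both $\lambda$ and $\mu$ lie strictly inside $(0,1)$. At the endpoints, $J_0^\perp(X)=J_1^\perp(X)=1$ holds in every space, so the equivalence would fail there. The strict inequality is exactly what guarantees $\min\{\lambda,1-\lambda\}>0$ in the converse direction of Theorem \ref{t1}.
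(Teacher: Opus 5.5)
Your proposal is correct and follows the paper's route: the paper presents this corollary as a direct consequence of Theorem \ref{t1}, applied at $\lambda$ and at $\mu$ and chained through the parameter-free condition $J_\perp(X)=2$ (equivalently, $X$ not uniformly nonsquare). Your remark that $0<\lambda,\mu<1$ is essential is also accurate, since $J_0^\perp(X)=J_1^\perp(X)=1$ in every space.
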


\begin{corollary}
The following are equivalent:
\begin{enumerate}
\item \(X\) is uniformly nonsquare;
\item \(J_\lambda^\perp(X)<1\) for every \(\lambda\in(0,1)\);
\item \(J_\lambda^\perp(X)<1\) for some \(\lambda\in(0,1)\).
\end{enumerate}
\end{corollary}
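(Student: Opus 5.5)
The plan is to derive all three equivalences from Theorem \ref{t1}, which shows that for each fixed \(\lambda\in(0,1)\),
\[
J_\lambda^\perp(X)=1 \iff X \text{ is not uniformly nonsquare}.
\]
Taking negations, and using the bound \(J_\lambda^\perp(X)\leq 1\) from the first proposition of Section 2, we get for every fixed \(\lambda\in(0,1)\) that
\[
J_\lambda^\perp(X)<1 \iff X \text{ is uniformly nonsquare}.
\]
The key point is that the condition on the right does not depend on \(\lambda\).

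The cycle of implications then runs as follows. For (1) \(\Rightarrow\) (2), assume \(X\) is uniformly nonsquare. The displayed equivalence gives \(J_\lambda^\perp(X)<1\) for each \(\lambda\in(0,1)\). One can also see this quantitatively: by \cite{10}, \(J_\perp(X)\leq 2-\delta\) for some \(\delta>0\), and the preceding corollary then yields \(J_\lambda^\perp(X)\leq 1-\delta\min\{\lambda,1-\lambda\}<1\).

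The implication (2) \(\Rightarrow\) (3) is trivial, since \((0,1)\) is nonempty.

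For (3) \(\Rightarrow\) (1), suppose \(J_{\lambda_0}^\perp(X)<1\) for some \(\lambda_0\in(0,1)\). Then \(J_{\lambda_0}^\perp(X)\neq 1\), so by Theorem \ref{t1} it is not the case that \(X\) fails to be uniformly nonsquare. Hence \(X\) is uniformly nonsquare.

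There is no real obstacle in this argument. The only care needed is to use the uniform upper bound \(J_\lambda^\perp(X)\leq1\), so that "\(\neq 1\)" and "\(<1\)" coincide, and to restrict to \(\lambda\in(0,1)\). This restriction matters because \(J_0^\perp(X)=J_1^\perp(X)=1\) in every space, so the endpoints must be excluded.
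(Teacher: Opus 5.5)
Your proposal is correct and matches the paper's approach: the paper states this corollary without proof, as an immediate consequence of Theorem~\ref{t1}. Your argument supplies exactly the omitted details, namely negating that equivalence for each fixed $\lambda\in(0,1)$ and using $J_\lambda^\perp(X)\le 1$ so that $J_\lambda^\perp(X)\neq 1$ is the same as $J_\lambda^\perp(X)<1$.
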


\begin{remark} It is known that any uniformly non-square Banach space has the fixed point property \cite{15}. Thus, it can be known from the above corollary that when 
$
J_\lambda^{\perp}(X)<1,
$ then a Banach space $X$ has the fixed point property.
\end{remark}

\begin{proposition}
If \(X\) and \(Y\) are linearly isometric Banach spaces, then
\[
J_\lambda^\perp(X)=J_\lambda^\perp(Y)
\]
for every \(\lambda\in[0,1]\).
\end{proposition}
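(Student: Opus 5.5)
The plan is to transport orthogonal pairs along the isometry and show that the quantity $m_\lambda$ is unchanged. Let $T:X\to Y$ be a surjective linear isometry, so $\|Tu\|_Y=\|u\|_X$ for all $u\in X$, and $T^{-1}:Y\to X$ is again a linear isometry. By the symmetry of the situation, it suffices to prove
\[
J_\lambda^\perp(X)\leq J_\lambda^\perp(Y);
\]
applying the same argument to $T^{-1}$ then gives the reverse inequality.

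\textbf{Step 1: $T$ preserves the admissible pairs.} Take $x,y\in S_X$ with $x\perp_B y$.
\begin{itemize}
\item Since $T$ is an isometry, $Tx,Ty\in S_Y$.
\item For every $t\in\R$, linearity gives
\[
\|Tx+tTy\|_Y=\|T(x+ty)\|_Y=\|x+ty\|_X\geq\|x\|_X=\|Tx\|_Y .
\]
\end{itemize}
Hence $Tx\perp_B Ty$ in $Y$, and $(Tx,Ty)$ is an admissible pair for $J_\lambda^\perp(Y)$.

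\textbf{Step 2: $m_\lambda$ is preserved.} Again by linearity and the isometry property,
\[
\|\lambda Tx\pm(1-\lambda)Ty\|_Y=\|T(\lambda x\pm(1-\lambda)y)\|_Y=\|\lambda x\pm(1-\lambda)y\|_X .
\]
So $m_\lambda(Tx,Ty)=m_\lambda(x,y)$. Consequently every value $m_\lambda(x,y)$ in the set defining $J_\lambda^\perp(X)$ also lies in the set defining $J_\lambda^\perp(Y)$.

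\textbf{Step 3: conclusion.} Taking the supremum over all admissible pairs in $X$ gives $J_\lambda^\perp(X)\leq J_\lambda^\perp(Y)$. Applying the same reasoning to $T^{-1}$ gives $J_\lambda^\perp(Y)\leq J_\lambda^\perp(X)$, and therefore equality holds.

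The argument holds for every $\lambda\in[0,1]$, including the endpoints, where both constants equal $1$ by the earlier remark. The only point needing care is that the isometry is onto, so that $T^{-1}$ exists and the reverse inequality follows. If only an into-isometry were assumed, Steps 1 and 2 would still give $J_\lambda^\perp(X)\le J_\lambda^\perp(Y)$, which is just the subspace monotonicity used earlier in the paper.
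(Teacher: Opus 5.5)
Your proposal is correct and follows the same route as the paper: a surjective linear isometry $T$ carries Birkhoff--James orthogonal pairs in $S_X$ to such pairs in $S_Y$ and preserves $\|\lambda x\pm(1-\lambda)y\|$, and applying the same argument to $T^{-1}$ gives the reverse inequality. Your closing remark that an into-isometry already yields $J_\lambda^\perp(X)\le J_\lambda^\perp(Y)$ is also correct.
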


\begin{proof}
Let \(T:X\to Y\) be a surjective linear isometry. Then \(T(S_X)=S_Y\).

If \(x\perp_B y\) in \(X\), then for every \(t\in\mathbb R\),
\[
\|Tx+tTy\|
=
\|T(x+ty)\|
=
\|x+ty\|
\geq
\|x\|
=
\|Tx\|.
\]
Hence \(Tx\perp_B Ty\) in \(Y\). Similarly, applying \(T^{-1}\), every
Birkhoff--James orthogonal pair in \(Y\) comes from one in \(X\).

Moreover,
\[
\|\lambda Tx\pm(1-\lambda)Ty\|
=
\|T(\lambda x\pm(1-\lambda)y)\|
=
\|\lambda x\pm(1-\lambda)y\|.
\]
Thus the two suprema coincide.
\end{proof}

\begin{proposition}Let $X$ be strictly convex and let $0<\lambda<1$. If $x, y \in S_{X}$ and $x \perp_{B} y$, then

$$
m_{\lambda}(x, y)<1
$$
\end{proposition}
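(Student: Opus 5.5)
Since $m_\lambda(x,y)$ is a minimum, it suffices to show that at least one of $\|\lambda x+(1-\lambda)y\|$, $\|\lambda x-(1-\lambda)y\|$ is strictly less than $1$. In fact, strict convexity gives more: both are strictly less than $1$. Each is a proper convex combination of two unit vectors:
\[
\lambda x+(1-\lambda)y \quad\text{combines } x \text{ and } y, \qquad \lambda x-(1-\lambda)y \quad\text{combines } x \text{ and } -y,
\]
with weights $\lambda,1-\lambda\in(0,1)$. By the triangle inequality each has norm at most $1$.

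Strict convexity means that $\|u\|=\|v\|=1$ and $\|\lambda u+(1-\lambda)v\|=1$ with $0<\lambda<1$ force $u=v$. If the norm is only known to equal $1$ at the midpoint, one still gets a contradiction, because the unit sphere contains no nontrivial segment. Hence we need only rule out $x=y$ and $x=-y$.

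This exclusion is where Birkhoff orthogonality enters. If $y=x$, take $t=-1$: then $\|x+ty\|=0<1=\|x\|$, contradicting $x\perp_B y$. If $y=-x$, take $t=1$ to get the same contradiction. So $y\neq\pm x$. Strict convexity then yields
\[
\|\lambda x\pm(1-\lambda)y\|<1,
\]
and therefore $m_\lambda(x,y)<1$.

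The only delicate point is using the correct formulation of strict convexity for a general $\lambda$ rather than just the midpoint. I would justify it briefly. Suppose $\|\lambda u+(1-\lambda)v\|=1$ with $u\neq v$ on $S_X$, and let $f$ be a norming functional at $\lambda u+(1-\lambda)v$. Then $f(u)\le1$, $f(v)\le1$, and their convex combination equals $1$, so $f(u)=f(v)=1$. Hence the whole segment $[u,v]$ lies in $S_X$, contradicting strict convexity.
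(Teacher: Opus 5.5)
Your proof is correct and follows essentially the same route as the paper: if a convex combination of two unit vectors has norm $1$, strict convexity forces the two vectors to coincide, and $y=x$ (or $y=-x$) is incompatible with $x\perp_B y$ by taking $t=-1$ (or $t=1$). You obtain the slightly stronger conclusion that both norms are $<1$, whereas the paper only uses the ``$+$'' combination, and you justify via a norming functional the general-$\lambda$ form of strict convexity that the paper invokes without comment.
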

\begin{proof} For every $x, y \in S_{X}$,

$$
\|\lambda x+(1-\lambda) y\| \leq 1, \quad\|\lambda x-(1-\lambda) y\| \leq 1
$$

Suppose, for contradiction, that $m_{\lambda}(x, y)=1$. Then both norms must equal 1 :

$$
\|\lambda x+(1-\lambda) y\|=1, \quad\|\lambda x-(1-\lambda) y\|=1
$$

Since $0<\lambda<1$ and $X$ is strictly convex, equality in

$$
\|\lambda x+(1-\lambda) y\| \leq \lambda\|x\|+(1-\lambda)\|y\|=1
$$

implies $x=y$. But then $x \perp_{B} x$, this contradiction proves the result.
\end{proof}
\begin{corollary} If $X$ is finite-dimensional and strictly convex, then for every $\lambda \in(0,1)$,

$$
J_{\lambda}^{\perp}(X)<1.
$$
\end{corollary}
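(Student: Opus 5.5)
The plan is to combine the attainment result for finite-dimensional spaces with the preceding proposition on strictly convex spaces. Fix $\lambda\in(0,1)$. Since $X$ is finite-dimensional, the attainment proposition proved earlier in the paper gives $x_0,y_0\in S_X$ with $x_0\perp_B y_0$ and
\[
J_\lambda^\perp(X)=m_\lambda(x_0,y_0).
\]
(Such pairs exist because $\dim X\ge 2$ is assumed throughout, so the admissible set $\mathcal O$ is nonempty and compact.) Strict convexity and the preceding proposition then give $m_\lambda(x_0,y_0)<1$, hence $J_\lambda^\perp(X)<1$.

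The only point needing care is the preceding proposition itself. Its stated justification (equality in the triangle inequality forces $x=y$) is slightly imprecise, so I would recheck it. If $\|\lambda x+(1-\lambda)y\|=1$ with $x,y\in S_X$, then strict convexity forces $x=y$. Likewise, $\|\lambda x-(1-\lambda)y\|=1$ forces $x=-y$. Both cannot hold simultaneously for unit vectors. In fact, either one alone contradicts $x\perp_B y$, since $x\perp_B \pm x$ fails by taking $t=\mp1$. So $m_\lambda(x,y)<1$ for every orthogonal pair, and the main step is really just invoking compactness so that the supremum is attained rather than merely approached.

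As a cross-check, one could instead argue through Theorem \ref{t1}. A finite-dimensional strictly convex space is uniformly convex, hence uniformly nonsquare, so $J_\lambda^\perp(X)<1$ by that theorem. The attainment argument above is more direct, and it is the route I would follow.
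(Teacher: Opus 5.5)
Your proposal is correct and follows the paper's proof: finite-dimensional attainment gives an orthogonal pair $x_0\perp_B y_0$ realizing $J_\lambda^\perp(X)$, and the preceding strict-convexity proposition gives $m_\lambda(x_0,y_0)<1$. Your recheck of that proposition (either equality forces $y=\pm x$, which is incompatible with $x\perp_B y$) is sound. Your alternative route, strict convexity implying uniform convexity in finite dimensions and then Theorem~\ref{t1}, is also valid.
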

\begin{proof}By finite-dimensional attainment, there exist $x_{0}, y_{0} \in S_{X}$ with $x_{0} \perp_{B} y_{0}$ such that

$$
J_{\lambda}^{\perp}(X)=m_{\lambda}\left(x_{0}, y_{0}\right).
$$

By the preceding theorem,

$$
m_{\lambda}\left(x_{0}, y_{0}\right)<1.
$$

Thus

$$
J_{\lambda}^{\perp}(X)<1.
$$
\end{proof}

\begin{proposition} Let $X$ be finite-dimensional and let $0<\lambda<$ 1. If

$$
J_{\lambda}^{\perp}(X)=1
$$

then $S_{X}$ contains a nontrivial line segment. In particular, $X$ is not strictly convex.
\end{proposition}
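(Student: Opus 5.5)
Since $X$ is finite-dimensional, the attainment proposition gives $x_0,y_0\in S_X$ with $x_0\perp_B y_0$ and
\[
m_\lambda(x_0,y_0)=J_\lambda^\perp(X)=1 .
\]
Because each of $\|\lambda x_0\pm(1-\lambda)y_0\|$ is at most $\lambda\|x_0\|+(1-\lambda)\|y_0\|=1$ and their minimum is $1$, both equal $1$. In particular
\[
\|\lambda x_0+(1-\lambda)y_0\|=1 .
\]
So the point $z=\lambda x_0+(1-\lambda)y_0$ is a proper convex combination, with $0<\lambda<1$, of the two unit vectors $x_0,y_0$, and $z$ itself has norm one.

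We then use the standard fact that the unit ball is convex. If $\|x_0\|=\|y_0\|=1$ and some proper convex combination of them has norm $1$, then every point of $[x_0,y_0]$ has norm $1$. The argument is as follows. The function $\varphi(s)=\|s x_0+(1-s)y_0\|$ is convex on $[0,1]$ and bounded above by $1$. It equals $1$ at the endpoints and also at the interior point $s=\lambda$. A convex function on an interval bounded by $1$ that attains $1$ at an interior point must be identically $1$: for $s<\lambda$, write $\lambda$ as a convex combination of $s$ and $1$; then $1=\varphi(\lambda)$ is at most the corresponding combination of $\varphi(s)$ and $\varphi(1)=1$, which forces $\varphi(s)\ge 1$. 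The case $s>\lambda$ is symmetric. Hence $[x_0,y_0]\subset S_X$.

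It remains to check that this segment is nontrivial, i.e.\ $x_0\neq y_0$. This is the only point needing care. If $x_0=y_0$, then $x_0\perp_B x_0$, so taking $t=-1$ gives $1=\|x_0\|\le\|x_0-x_0\|=0$, a contradiction. (Alternatively, the equality $\|\lambda x_0-(1-\lambda)y_0\|=1$ would fail, since it would read $|2\lambda-1|=1$.) Thus $S_X$ contains the nondegenerate segment $[x_0,y_0]$, and $X$ is not strictly convex. This conclusion is also consistent with the preceding corollary for strictly convex finite-dimensional spaces. The main step is the attainment proposition; once it is available, the rest is routine.
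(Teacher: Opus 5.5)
Your proof is correct and follows essentially the same route as the paper: finite-dimensional attainment, forcing both norms $\|\lambda x_0\pm(1-\lambda)y_0\|$ to equal $1$, and convexity of the unit ball to place the segment $[x_0,y_0]$ in $S_X$. You also supply two details the paper only asserts. The first is the convex-function argument showing the whole segment lies on the sphere. The second is the check that $x_0\neq y_0$, since a unit vector cannot be Birkhoff orthogonal to itself.
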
 

\begin{proof} By finite-dimensional attainment, there exist $x_{0}, y_{0} \in S_{X}$ with $x_{0} \perp_{B} y_{0}$ such that

$$
m_{\lambda}\left(x_{0}, y_{0}\right)=1
$$

Since each of the two norms is at most 1 , we must have

$$
\left\|\lambda x_{0}+(1-\lambda) y_{0}\right\|=1, \quad\left\|\lambda x_{0}-(1-\lambda) y_{0}\right\|=1
$$

The first equality says that an interior convex combination of $x_{0}$ and $y_{0}$ lies on the unit sphere. By convexity of the unit ball, the whole segment

$$
\left[x_{0}, y_{0}\right]=\left\{t x_{0}+(1-t) y_{0}: 0 \leq t \leq 1\right\}
$$

is contained in $S_{X}$. Since $x_{0} \neq y_{0}$, this is a nontrivial line segment. Hence $X$ is not strictly convex.
\end{proof}

The modulus of convexity has multiple equivalent formulations (see \cite{22}). We present one such equivalent definition below, and the subsequent theorem justifies why this particular form is adopted.

 For $\epsilon \in[0,2],$ the modulus of convexity is defined :

$$
\delta_X(\epsilon)=\inf \left\{1-\frac{\|x+y\|}{2}: x, y \in S_X ;\|x-y\| \geq \epsilon\right\} .
$$

\begin{theorem} For every Banach space $X$ and every $\lambda \in[0,1]$,

$$
J_{\lambda}^{\perp}(X) \leq 1-2 \min \{\lambda, 1-\lambda\} \delta_{X}(1)
$$
\end{theorem}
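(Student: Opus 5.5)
We plan to combine the upper bound of Proposition \ref{p1} with a bound on $J_\perp(X)$ in terms of $\delta_X(1)$. By Proposition \ref{p1},
\[
J_\lambda^\perp(X)\leq |2\lambda-1|+\min\{\lambda,1-\lambda\}J_\perp(X)=1-\min\{\lambda,1-\lambda\}\bigl(2-J_\perp(X)\bigr),
\]
using $1-|2\lambda-1|=2\min\{\lambda,1-\lambda\}$. So it suffices to prove
\[
J_\perp(X)\leq 2-2\delta_X(1).
\]
For $\lambda\in\{0,1\}$ the statement reads $J_\lambda^\perp(X)\le1$, which holds by the Remark, so only $0<\lambda<1$ needs the argument (though the reduction above covers all cases uniformly).

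To bound $J_\perp(X)$, take $x,y\in S_X$ with $x\perp_B y$. We split into two cases according to $\|x-y\|$.

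\emph{Case $\|x-y\|\geq 1$.} The definition of $\delta_X$ directly gives $\|x+y\|\leq 2-2\delta_X(1)$. Hence $\min\{\|x+y\|,\|x-y\|\}\leq 2-2\delta_X(1)$.

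\emph{Case $\|x-y\|<1$.} Here $\min\{\|x+y\|,\|x-y\|\}<1$. We need $1\leq 2-2\delta_X(1)$, i.e.\ $\delta_X(1)\leq \tfrac12$. This follows from the standard estimate $\delta_X(\epsilon)\leq \epsilon/2$: taking $x,y\in S_X$ with $\|x-y\|=\epsilon$, we get $\|x+y\|\geq 2\|x\|-\|x-y\|=2-\epsilon$. (Alternatively, $\delta_X(1)\le 1-\sqrt{3}/2$ by Nordlander's comparison with Hilbert space, but the elementary bound suffices.) Points at distance exactly $1$ on $S_X$ exist since $\dim X\ge2$ and $S_X$ is connected, which makes the infimum defining $\delta_X(1)$ nonvacuous.

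Taking the supremum over all orthogonal pairs in the two cases gives $J_\perp(X)\leq 2-2\delta_X(1)$. Inserting this into the first display yields
\[
J_\lambda^\perp(X)\leq 1-2\min\{\lambda,1-\lambda\}\,\delta_X(1).
\]

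The main obstacle is only the minor second case, namely confirming $\delta_X(1)\le 1/2$ with the infimum taken over a nonempty set. The orthogonality hypothesis is in fact not needed; the same argument bounds $J(X)$ itself.
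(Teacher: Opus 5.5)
Your proof is correct, but it takes a different route from the paper's.

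The paper does not go through Proposition \ref{p1}. It observes that $x\perp_B y$ gives both $\|x-y\|\ge 1$ and $\|x+y\|\ge 1$. So the definition of $\delta_X(1)$ applies to both pairs $(x,y)$ and $(x,-y)$, and yields $\|x\pm y\|/2\le 1-\delta_X(1)$. Then the decompositions $\lambda x\pm(1-\lambda)y=2\lambda\frac{x\pm y}{2}\pm(1-2\lambda)y$ for $\lambda\le 1/2$, and $(2\lambda-1)x+2(1-\lambda)\frac{x\pm y}{2}$ for $\lambda\ge 1/2$, bound \emph{each} of the two norms by $1-2\min\{\lambda,1-\lambda\}\delta_X(1)$.

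You instead factor the estimate as Proposition \ref{p1} plus $J_\perp(X)\le 2-2\delta_X(1)$. Your second case can never occur for Birkhoff orthogonal pairs: $x\perp_B y$ with $t=-1$ already gives $\|x-y\|\ge\|x\|=1$. So the detour through $\delta_X(1)\le 1/2$ and the existence of points of $S_X$ at distance $1$ is correct but unnecessary here.

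Your route gains modularity and shows that orthogonality plays no essential role in this inequality. The upper-bound half of Proposition \ref{p1} never uses orthogonality, and your case split proves $J(X)\le 2-2\delta_X(1)$. So the same bound holds for the non-orthogonal constant $J(\lambda,X)$.

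The paper's route gains a stronger pointwise statement. For every orthogonal pair it controls $\max\{\|\lambda x+(1-\lambda)y\|,\|\lambda x-(1-\lambda)y\|\}$, not just the minimum. This is exactly where orthogonality, through $\|x\pm y\|\ge 1$, is genuinely needed; for $y$ near $-x$ that bound on the maximum fails.
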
 
\begin{proof}Let $x, y \in S_{X}$ with $x \perp_{B} y$, then

$$
\|x-y\| \geq 1, \quad\|x+y\| \geq 1
$$

Assume first that $0 \leq \lambda \leq 1 / 2$. Then

$$
\lambda x+(1-\lambda) y=2 \lambda \frac{x+y}{2}+(1-2 \lambda) y
$$

Hence

$$
\|\lambda x+(1-\lambda) y\| \leq 2 \lambda\left\|\frac{x+y}{2}\right\|+1-2 \lambda
$$

Since $\|x-y\| \geq 1$,

$$
\left\|\frac{x+y}{2}\right\| \leq 1-\delta_{X}(1)
$$

Thus

$$
\|\lambda x+(1-\lambda) y\| \leq 1-2 \lambda \delta_{X}(1)
$$

Similarly,

$$
\lambda x-(1-\lambda) y=2 \lambda \frac{x-y}{2}-(1-2 \lambda) y .
$$

Since $\|x+y\| \geq 1$, we also get

$$
\|\lambda x-(1-\lambda) y\| \leq 1-2 \lambda \delta_{X}(1)
$$

Therefore

$$
m_{\lambda}(x, y) \leq 1-2 \lambda \delta_{X}(1)
$$

If $1 / 2 \leq \lambda \leq 1$, then

$$
\lambda x+(1-\lambda) y=(2 \lambda-1) x+2(1-\lambda) \frac{x+y}{2}
$$

and similarly for the minus sign. The same argument yields

$$
m_{\lambda}(x, y) \leq 1-2(1-\lambda) \delta_{X}(1).
$$

Combining the two cases gives

$$
m_{\lambda}(x, y) \leq 1-2 \min \{\lambda, 1-\lambda\} \delta_{X}(1).
$$

Taking the supremum over all admissible pairs proves the theorem.\end{proof}

\begin{corollary} If $X$ is uniformly convex, then for every $\lambda \in(0,1)$,

$$
J_{\lambda}^{\perp}(X)<1.
$$
\end{corollary}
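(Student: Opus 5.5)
The plan is to use the preceding theorem,
\[
J_{\lambda}^{\perp}(X)\leq 1-2\min\{\lambda,1-\lambda\}\,\delta_X(1),
\]
and show that $\delta_X(1)>0$ whenever $X$ is uniformly convex. Then the right-hand side is strictly less than $1$ for $\lambda\in(0,1)$.

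First, I recall that uniform convexity means $\delta_X(\epsilon)>0$ for every $\epsilon\in(0,2]$. If one prefers the definition with $\|x-y\|=\epsilon$ rather than $\|x-y\|\ge\epsilon$, I would note that the infimum used in the paper (over $\|x-y\|\geq \epsilon$) is still positive at $\epsilon=1$. This follows directly from the standard $\varepsilon$--$\delta$ formulation: there is $\delta>0$ such that $x,y\in S_X$ with $\|x-y\|\geq 1$ imply $\|x+y\|/2\leq 1-\delta$. Hence $\delta_X(1)\geq\delta>0$.

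Second, for $\lambda\in(0,1)$ we have $\min\{\lambda,1-\lambda\}>0$. Therefore $2\min\{\lambda,1-\lambda\}\delta_X(1)>0$, and the preceding theorem gives $J_\lambda^\perp(X)<1$.

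Alternatively, I could go through the earlier characterization. Uniformly convex spaces are uniformly nonsquare, since with $\|x-y\|\ge 1$ or $\|x+y\|\ge 1$ for Birkhoff orthogonal pairs we get $\min\{\|x+y\|,\|x-y\|\}\le 2-2\delta_X(1)$. Theorem \ref{t1} and its corollary then yield $J_\lambda^\perp(X)<1$.

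The only point needing care is the positivity of $\delta_X(1)$ under the paper's chosen form of the modulus. This is essentially definitional, so I expect no real obstacle.
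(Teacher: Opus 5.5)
Your proposal is correct and matches the paper's proof: the paper also combines the preceding bound $J_\lambda^\perp(X)\le 1-2\min\{\lambda,1-\lambda\}\,\delta_X(1)$ with $\delta_X(1)>0$ for uniformly convex $X$, using $\min\{\lambda,1-\lambda\}>0$ for $\lambda\in(0,1)$. Your alternative route is also valid: Birkhoff orthogonality gives $J_\perp(X)\le 2-2\delta_X(1)<2$, and the characterization $J_\lambda^\perp(X)=1\iff J_\perp(X)=2$ then yields the conclusion.
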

\begin{proof} If $X$ is uniformly convex, then
$
\delta_{X}(1)>0 (see \cite{23}).
$

Therefore, by the preceding theorem,

$$
J_{\lambda}^{\perp}(X) \leq 1-2 \min \{\lambda, 1-\lambda\} \delta_{X}(1)<1
$$

for every $\lambda \in(0,1)$.
\end{proof}

\begin{proposition} Let $0<\lambda<1$. For a finite-dimensional Banach space $X$,
$
J_{\lambda}^{\perp}(X)=1
$
if and only if there exist $x, y \in S_{X}$ with $x \perp_{B} y$ such that

$$
\lambda x+(1-\lambda) y \in S_{X} \quad \text { and } \quad \lambda x-(1-\lambda) y \in S_{X} .
$$
\end{proposition}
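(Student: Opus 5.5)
The plan is to prove the two implications separately. Both rest on the finite-dimensional attainment proposition and the upper bound $J_\lambda^\perp(X)\le 1$ from the first proposition of Section 2.

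For the forward direction, assume $J_\lambda^\perp(X)=1$. Since $X$ is finite-dimensional, the attainment proposition gives $x_0,y_0\in S_X$ with $x_0\perp_B y_0$ and
\[
m_\lambda(x_0,y_0)=J_\lambda^\perp(X)=1 .
\]
Each of the two norms $\|\lambda x_0\pm(1-\lambda)y_0\|$ is at most $\lambda\|x_0\|+(1-\lambda)\|y_0\|=1$ by the triangle inequality. Their minimum equals $1$, so both norms equal $1$. That is exactly the statement $\lambda x_0\pm(1-\lambda)y_0\in S_X$, and we take $x=x_0$, $y=y_0$.

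For the converse, suppose $x,y\in S_X$ with $x\perp_B y$ satisfy $\lambda x\pm(1-\lambda)y\in S_X$. Then $(x,y)$ is an admissible pair in the definition of $J_\lambda^\perp(X)$, and $m_\lambda(x,y)=\min\{1,1\}=1$. Hence $J_\lambda^\perp(X)\ge 1$. Combined with the general bound $J_\lambda^\perp(X)\le 1$, this gives equality. This direction does not use finite dimensionality at all.

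There is no real obstacle here. The only substantive ingredient is compactness, used in the forward direction: it upgrades the supremum to an attained maximum, and that is already supplied by the attainment proposition. Without finite dimensionality the forward implication could fail, because the supremum might only be approached and never attained.
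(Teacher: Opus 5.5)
Your proposal is correct and follows the paper's proof essentially verbatim: finite-dimensional attainment plus the triangle-inequality bound forces both norms to equal $1$, and the converse uses $m_\lambda(x,y)=1$ together with $J_\lambda^\perp(X)\le 1$. Your side remarks are also accurate: the converse does not need finite dimensionality, and attainment can fail in infinite dimensions, for example in a strictly convex space that is not uniformly nonsquare.
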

\begin{proof} Assume first that $J_{\lambda}^{\perp}(X)=1$. By finite-dimensional attainment, there exist $x, y \in S_{X}$ with $x \perp_{B} y$ such that
$
m_{\lambda}(x, y)=1 \text {. }
$

Since both terms in the minimum are at most 1 , both must equal 1 . Hence

$$
\lambda x+(1-\lambda) y \in S_{X}, \quad \lambda x-(1-\lambda) y \in S_{X} .
$$

Conversely, if such $x, y$ exist, then
$
m_{\lambda}(x, y)=1 \text {. }
$

Since $J_{\lambda}^{\perp}(X)$ is the supremum of $m_{\lambda}$ over all admissible pairs and is always at most 1 , we get
$
J_{\lambda}^{\perp}(X)=1 .
$
\end{proof}
\begin{corollary}Let $X$ be finite-dimensional and $0<\lambda<1$. If

$$
J_{\lambda}^{\perp}(X)=1
$$

then there exist $x, y \in S_{X}$ with $x \perp_{B} y$ such that both segments
$[x, y]$ and $ [x,-y]$ are contained in $S_{X}$.\end{corollary}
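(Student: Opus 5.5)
The plan is to combine the preceding Proposition with the elementary fact about faces of a convex body: if an interior point of a segment lies on the unit sphere, then the whole segment does.

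\emph{Step 1.} Since $J_\lambda^\perp(X)=1$ and $X$ is finite-dimensional, the preceding Proposition gives $x,y\in S_X$ with $x\perp_B y$ such that
\[
\lambda x+(1-\lambda)y\in S_X\quad\text{and}\quad\lambda x-(1-\lambda)y\in S_X.
\]

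\emph{Step 2.} I would prove the following auxiliary claim: if $u,v\in S_X$, $0<\lambda<1$ and $\|\lambda u+(1-\lambda)v\|=1$, then $[u,v]\subset S_X$. Let $f\in S_{X^*}$ be a norming functional for $z=\lambda u+(1-\lambda)v$, which exists by Hahn--Banach. Then
\[
1=f(z)=\lambda f(u)+(1-\lambda)f(v).
\]
Since $f(u)\le 1$ and $f(v)\le1$, and both weights are positive, this forces $f(u)=f(v)=1$. Now take any $w=tu+(1-t)v$ with $t\in[0,1]$. On one hand $f(w)=1$, so $\|w\|\ge1$. On the other hand $\|w\|\le t\|u\|+(1-t)\|v\|=1$. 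Hence $\|w\|=1$.

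\emph{Step 3.} Apply the claim with $(u,v)=(x,y)$, using the first membership from Step 1; this gives $[x,y]\subset S_X$. Apply it again with $(u,v)=(x,-y)$, noting that $-y\in S_X$ and that $\lambda x-(1-\lambda)y=\lambda x+(1-\lambda)(-y)$; this gives $[x,-y]\subset S_X$.

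\emph{Step 4.} To justify calling the segments nontrivial, I would check that $x\neq\pm y$. If $x=y$, then $x\perp_B x$, and taking $t=-1$ yields $\|x\|\le 0$, which is impossible. The same choice $t=1$ rules out $x=-y$.

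The only real content is the auxiliary claim in Step 2. The paper's earlier argument ("by convexity of the unit ball, the whole segment is contained in $S_X$") passes over it quickly, so I would make the supporting-functional argument explicit there. Everything else is routine.
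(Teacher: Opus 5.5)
Your proposal is correct and follows the paper's proof. You invoke the preceding proposition to obtain $x\perp_B y$ with $\lambda x\pm(1-\lambda)y\in S_X$, then pass from an interior point of each segment lying on $S_X$ to the whole of $[x,y]$ and $[x,-y]$. Your norming-functional argument in Step 2 simply makes explicit the convexity step that the paper states in one line.
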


\begin{proof} By the preceding theorem, there exist $x, y \in S_{X}$ such that

$$
\|\lambda x+(1-\lambda) y\|=1, \quad\|\lambda x-(1-\lambda) y\|=1 .
$$

Since $\lambda x+(1-\lambda) y$ is an interior convex combination of $x$ and $y$, convexity of $B_{X}$ implies that the whole segment $[x, y]$ lies on $S_{X}$. Similarly, $\lambda x+(1-\lambda)(-y)$ is an interior convex combination of $x$ and $-y$, and hence
$
[x,-y] \subset S_{X}.
$
\end{proof}

\begin{proposition}
	\label{thm:smoothness}
	Let $X$ be a real Banach space and $0<\lambda\le 1$. Then
	\[
	J_\lambda^\perp(X)
	\le
	\min\left\{
	1,\,
	\lambda\left(1+\rho_X\!\left(\frac{1-\lambda}{\lambda}\right)\right)
	\right\}.
	\]
	In particular, for $\lambda=\frac12$,
	\[
	J_{1/2}^\perp(X)\le \frac{1+\rho_X(1)}{2}.
	\]
\end{proposition}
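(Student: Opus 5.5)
The bound $J_\lambda^\perp(X)\le 1$ is already in the first Proposition of Section~2, so I only need $J_\lambda^\perp(X)\le \lambda\bigl(1+\rho_X(\tfrac{1-\lambda}{\lambda})\bigr)$. The idea is to pass from the minimum to the average of the two norms and then rescale so the definition of $\rho_X$ applies directly. Birkhoff orthogonality is used only through the fact that the pair is admissible; the inequality actually holds for all $x,y\in S_X$.

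Fix $x,y\in S_X$ with $x\perp_B y$ and put $t=\frac{1-\lambda}{\lambda}\ge 0$, which makes sense since $\lambda>0$. As in the proof of Theorem~\ref{t11}, factor out $\lambda$:
\[
\|\lambda x\pm(1-\lambda)y\|=\lambda\|x\pm t y\|.
\]
A minimum is at most an average, so
\[
m_\lambda(x,y)\le \frac{\|\lambda x+(1-\lambda)y\|+\|\lambda x-(1-\lambda)y\|}{2}
=\lambda\,\frac{\|x+ty\|+\|x-ty\|}{2}.
\]
Since $x,y\in S_X$, the definition of the modulus of smoothness gives
\[
\frac{\|x+ty\|+\|x-ty\|}{2}\le 1+\rho_X(t).
\]
Hence $m_\lambda(x,y)\le \lambda\bigl(1+\rho_X(\frac{1-\lambda}{\lambda})\bigr)$.

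Taking the supremum over all admissible pairs and combining with $J_\lambda^\perp(X)\le 1$ gives the stated minimum. For $\lambda=1/2$ we have $t=1$, so the bound becomes $\frac12(1+\rho_X(1))$. The edge case $\lambda=1$ gives $t=0$ and $\rho_X(0)=0$, so the bound reads $1$, which is consistent.

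There is no real obstacle here. The only point needing care is the rescaling by $\lambda$, which requires $\lambda>0$; this is guaranteed by the hypothesis $0<\lambda\le 1$.
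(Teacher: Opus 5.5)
Your proof is correct and follows the paper's argument: rescale by $\lambda$, bound the minimum by the average, apply the definition of $\rho_X$ at $t=(1-\lambda)/\lambda$, and combine with the trivial bound $1$. In passing, the paper's first display in its proof writes $\rho_X\bigl(\lambda/(1-\lambda)\bigr)$, a typo that would be undefined at $\lambda=1$; your version, with $t=(1-\lambda)/\lambda$ and the check $\rho_X(0)=0$, avoids it.
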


\begin{proof}
	Let $x,y\in S_X$ with $x\perp_B y$, then by the definition of $\rho_X$,
	\[
	\frac{\|x+\frac{1-\lambda}{\lambda}y\|+\|x-\frac{1-\lambda}{\lambda}y\|}{2}
	\le
	1+\rho_X\!\left(\frac{\lambda}{1-\lambda}\right).
	\]
	Multiplying by $\lambda$ yields
	\[
	\frac{\|\lambda x+(1-\lambda) y\|+\|\lambda x-(1-\lambda)y\|}{2}
	\le
	\lambda\left(1+\rho_X\!\left(\frac{1-\lambda}{\lambda}\right)\right).
	\]
	Since the minimum of two nonnegative numbers is bounded above by their average,
	\[
	\min\{\|\lambda x+(1-\lambda) y\|,\|\lambda x-(1-\lambda) y\|\}
	\le
	\lambda \left(1+\rho_X\!\left(\frac{(1-\lambda)}{\lambda}\right)\right).
	\]
	Taking the supremum over all admissible pairs gives
	\[
	J_\lambda^\perp(X)
	\le
	\lambda\left(1+\rho_X\!\left(\frac{1-\lambda}{\lambda}\right)\right).
	\]
	On the other hand, the triangle inequality gives
	\[
	\|\lambda x\pm(1-\lambda)y\|
	\le
	\lambda+(1-\lambda)=1,
	\]
	hence $J_\lambda^\perp(X)\le 1$. Combining the two estimates proves the theorem.
\end{proof}

\begin{proposition}
	\label{thm:CNJ}
	For every real Banach space $X$ and every $\lambda\in[0,1]$,
	\[
	J_\lambda^\perp(X)
	\le
	\min\left\{
	1,\,
	\sqrt{C_{\mathrm{NJ}}(X)\big(\lambda^2+(1-\lambda)^2\big)}
	\right\}.
	\]
\end{proposition}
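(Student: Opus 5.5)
The von Neumann--Jordan constant has not been displayed in the paper so far, so I will use its standard definition
\[
C_{\mathrm{NJ}}(X)=\sup\left\{\frac{\|x+y\|^2+\|x-y\|^2}{2(\|x\|^2+\|y\|^2)}: x,y\in X,\ (x,y)\neq(0,0)\right\}.
\]
The plan is to bound $m_\lambda(x,y)$ for a single Birkhoff--James orthogonal pair and then take the supremum. The bound $J_\lambda^\perp(X)\le 1$ was already obtained in the first proposition of Section 2 from the triangle inequality. So the only real task is the estimate $m_\lambda(x,y)^2\le C_{\mathrm{NJ}}(X)\bigl(\lambda^2+(1-\lambda)^2\bigr)$.

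Fix $x,y\in S_X$ with $x\perp_B y$ and $\lambda\in[0,1]$. I apply the definition of $C_{\mathrm{NJ}}(X)$ to the vectors $u=\lambda x$ and $v=(1-\lambda)y$. These are not both zero, since $\lambda$ and $1-\lambda$ cannot vanish simultaneously. We have $\|u\|^2+\|v\|^2=\lambda^2+(1-\lambda)^2$, and the definition gives
\[
\|\lambda x+(1-\lambda)y\|^2+\|\lambda x-(1-\lambda)y\|^2\le 2\,C_{\mathrm{NJ}}(X)\bigl(\lambda^2+(1-\lambda)^2\bigr).
\]

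The square of the minimum of two nonnegative numbers is at most the average of their squares. Hence
\[
m_\lambda(x,y)^2\le \tfrac12\bigl(\|\lambda x+(1-\lambda)y\|^2+\|\lambda x-(1-\lambda)y\|^2\bigr)\le C_{\mathrm{NJ}}(X)\bigl(\lambda^2+(1-\lambda)^2\bigr).
\]
Taking square roots and then the supremum over all admissible pairs gives the second bound. Combining it with $J_\lambda^\perp(X)\le1$ gives the stated minimum.

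Nothing here is a real obstacle, since the orthogonality hypothesis is not even needed. The only points to check are that the normalization of $C_{\mathrm{NJ}}$ used matches the one above, and that the endpoint cases $\lambda\in\{0,1\}$ are consistent. They are: there $\sqrt{C_{\mathrm{NJ}}(X)}\ge1$, so the minimum equals $1=J_\lambda^\perp(X)$, in agreement with the earlier remark.
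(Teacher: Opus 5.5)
Your proposal is correct and follows the paper's proof essentially verbatim. Like the paper, you substitute $u=\lambda x$, $v=(1-\lambda)y$ into the definition of $C_{\mathrm{NJ}}(X)$, bound the minimum by the square root of the quadratic mean, take the supremum, and combine with the triangle-inequality bound $J_\lambda^\perp(X)\le 1$. Your side remarks are also accurate: Birkhoff orthogonality is never used, and the endpoints $\lambda\in\{0,1\}$ are consistent because $C_{\mathrm{NJ}}(X)\ge 1$.
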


\begin{proof}
	Let $x,y\in S_X$ with $x\perp_B y$, and put
	\[
	u=\lambda x,\qquad v=(1-\lambda)y.
	\]
	By the definition of $C_{\mathrm{NJ}}(X)$,
	\[
	\frac{\|u+v\|^2+\|u-v\|^2}{2(\|u\|^2+\|v\|^2)}
	\le
	C_{\mathrm{NJ}}(X).
	\]
	Therefore
	\[
	\frac{\|\lambda x+(1-\lambda)y\|^2
		+
		\|\lambda x-(1-\lambda)y\|^2}{2}
	\le
	C_{\mathrm{NJ}}(X)\big(\lambda^2+(1-\lambda)^2\big).
	\]
	The smaller of two nonnegative numbers is bounded by the square root of their
	quadratic mean, so
	\[
	\min\{\|\lambda x+(1-\lambda)y\|,
	\|\lambda x-(1-\lambda)y\|\}
	\le
	\sqrt{C_{\mathrm{NJ}}(X)\big(\lambda^2+(1-\lambda)^2\big)}.
	\]
	Taking the supremum gives the asserted estimate. The triangle inequality again
	gives the additional bound $J_\lambda^\perp(X)\le 1$.
\end{proof}

\section*{Acknowledgments}

Thanks to all the members of the Functional Analysis Research Team at the School of Mathematics and Statistics, Anqing Normal University, for their valuable discussions and corrections regarding the
challenges and errors encountered in this article. 

We also thank Dr. Yuankang Fu for his valuable comments on this paper.
\raggedright

\end{document}